\newtheorem{theorem}{Theorem}
\newtheorem*{thm*}{Theorem}
\newtheorem{proposition}{Proposition}[section]
\newtheorem{lemma}{Lemma}[subsection]
\theoremstyle{definition}
\newtheoremstyle{remark}%
  {}
  {}
  {}
  {}
  {}
  {.}
  {.5em}
  {}
\theoremstyle{remark}
\numberwithin{equation}{section}
\newcommand{\R}{\mathbb{R}}  
\newcommand{\inner}[2]{\langle {#1}, {#2} \rangle} 
\newcommand{\arXiv}[1]{\texttt{arxiv:#1}}
\journal{arXiv.org}
\begin{document}

\begin{frontmatter}



\title{Enhanced Dissipation via time-modulated velocity fields}


\author[Surrey]{Johannes Benthaus}

\author[Surrey]{Camilla Nobili}

\affiliation[Surrey]{organization={School of Mathematics and Physics, University of Surrey},
            city={Guildford},
            postcode={GU2 7XH}, 
            country={UK}}


\begin{abstract}
Motivated by mixing processes in analytical laboratories, this work investigates enhanced dissipation in non-autonomous flows. We study the evolution of concentrations governed by the advection-diffusion equation, where the velocity field is modelled as the product of a shear flow and a time-dependent modulation function $\xi(t)$. The main objective of this paper is to derive quantitative estimates for the energy decay rates, which are shown to depend sensitively on the properties of $\xi$.

We identify a class of time-dependent functions that are bounded by increasing functions, for which we demonstrate super-enhanced dissipation, characterized by energy decay rates faster than those observed in autonomous cases.

Additionally, we explore the case of velocity fields that may be switched on and off over time. Here, the dissipation rates are comparable to those of autonomous flows.
To illustrate our results, we analyse two prototypical flows of this class: one exhibiting a gradual turn-on and turn-off phase, and another that undergoes a significant acceleration following a slow initial activation phase. 

Both results are achieved through the application of the “hypocoercivity” framework, adapted to an augmented functional with time-dependent weights. These weights are designed to \textit{dynamically} counteract the potential growth of $\xi$, ensuring robust decay estimates.

\end{abstract}
\begin{keyword}
Advection-Diffusion equation, Enhanced dissipation, Hypocoercivity, Mixing, Viscous fluids
\end{keyword}

\end{frontmatter}
\section{Introduction}

On the set $\Omega=\mathbb{T}^2\times [0,\infty)$, let
the concentration $\Theta=\Theta(x,y,t):\Omega\rightarrow \R$, and velocity field 
$\bm{u}=\bm{u}(x,y,t):\Omega\rightarrow \R^2$ solve  the Cauchy problem for the advection-diffusion equation
\begin{equation}\label{adv}
	\begin{array}{rrl}
		\partial_t \Theta +\bm{u}\cdot\nabla  \Theta-\nu \Delta \Theta&=&0\\
		\nabla\cdot \bm{u}&=&0\\
		\Theta(x,y,0)&=&\Theta_0(x,y)\,,
	\end{array}
\end{equation}
where $\nu\ll 1$ is the molecular diffusivity and $\Theta_0$ is a mean-free initial datum.
Further, we consider the special class of time-dependent shear flows of the type
\begin{equation}
	\bm{u}(x,y,t)=(v(y,t), 0)^T,
\end{equation}
for which the advection-diffusion equation reduces to
\begin{equation}
	\label{fourier_adv}
	\partial_t \hat{\Theta} +v(y,t) ik \hat{\Theta}=\nu (-k^2+\partial_y^2) \hat{\Theta}\,.
\end{equation}
where we conveniently Fourier transformed in the $x$-variable.
In this work, we are interested in demonstrating enhanced dissipation under specific assumptions on the time-dependent velocity fields. Enhanced dissipation is a physical phenomenon intrinsically connected with mixing \cite{zelati2023mixing}. Mixing generates gradients, i.e. high frequencies, and its effect for inviscid fluids has been widely studied (see the excellent survey \cite{thiffeault2012using} and references therein). 
For viscous fluids, diffusion acts to smooth out irregularities, effectively damping high-frequency components. The interplay between advection and diffusion leads to intricate physical phenomena \cite{Miles_2018, pierrehumbert1994tracer, thiffeault2004strange}, including the phenomenon known as \textit{enhanced dissipation}. Although physically well understood, its mathematical characterization has only recently been explored in specific settings due to its inherent complexity. In their seminal paper \cite{Constantin2008}, Constantin et al. characterize flows that induce enhanced dissipation, in terms of the spectral properties of the dynamical system associated with it. In \cite{Bedrossian2017} the authors quantify the enhanced dissipation effect due to shear by employing the hypocoercivity method introduced by Villani \cite{villani}. In particular, assumptions on the form of the shear are made, and the rates depend significantly on the maximal order of degeneracy of the flow's critical points. As already mentioned, enhanced dissipation and mixing are connected phenomena: the authors in \cite{Elgindi2019} establish a precise connection between quantitative mixing rates in terms of decay of negative Sobolev norms and enhanced dissipation time-scales. Mixing with diffusion is also at the base of the Batchelor scale conjecture, as stated by Charlie Doering in \cite{Miles_2018}. This conjecture asserts that the filamentation length, defined as the ratio of the $H^{-1}$-norm to the $L^2$-norm of the scalar concentration, converges to a constant known as the \textit{Batchelor scale} in the long-time limit. This provides an equivalent formulation, from a PDE perspective, of the theory of strange eigenmodes \cite{pierrehumbert1994tracer}, which is rooted in a dynamical systems approach. In \cite{nobili2022lower} the authors study the long time asymptotics of the filamentation length in the whole space, where they could make use of Fourier splitting technique. 
	Recent progress on the Batchelor-scale conjecture is presented in \cite{Blumenthal2023}, where the authors demonstrate that, under general assumptions, the top Lyapunov exponent associated with a dissipative linear evolution equation is independent of the choice of norm.
	In \cite{hairer2024}, the authors investigate the top Lyapunov exponent for the linear advection–diffusion equation and the linearised Navier–Stokes equations in vorticity form, with the velocity being solution of the stochastic Navier–Stokes equations driven by non-degenerate white-in-time noise with power-law correlations. They establish a lower bound on the exponent by a negative power of the diffusivity, providing the first such bound on the Batchelor scale.
\\

For autonomous shear flows, enhanced dissipation is well-established in the literature, and we therefore point towards the survey \cite{mazzucato2025} for reference. In \cite{cotizelatigallay} the authors, inspired by \cite{Bedrossian2017}, establish enhanced dissipation in the infinite channel \( \mathbb{R} \times [0,1] \), assuming that \( \Theta \) satisfies Neumann boundary conditions and that the velocity profile \( v(y) \) is either monotonic or has only finitely many simple critical points. These critical points, denoted \( \{y_i\}_{i=1}^N \), satisfy
\begin{equation}\label{simple-critical}
	v''(y_j) \neq 0 \quad \text{for } j = 1, \dots, N.
\end{equation}
Under these conditions, they demonstrate that the decay rate is governed by the maximal order of the critical points. We denote this order by $m$, setting $m=1$ in the monotonic case and $ m = 2 $ in the case of simple critical points. They then obtain the decay estimate
\begin{equation}
	\label{classical_ed}
	\|\hat{\Theta}(t)\|_{L^2_y} \leq C_1 \exp(-C_2 \lambda_{\nu,k} t - \nu k^2 t)\|\hat{\Theta}_0\|_{L^2_y} \qquad \mbox{ for all }t\geq 0
\end{equation}
with
\[
\lambda_{\nu,k} =
\begin{cases}
	\nu^{\frac{m}{m+2}} |k|^{\frac{2}{m+2}} & \text{if } 0 < \nu < |k|, \\[6pt]
	\frac{k^2}{\nu} & \text{if } 0 < |k| < \nu.
\end{cases}
\]
Notably, the authors demonstrate that this result can be equivalently derived using both a spectral approach (à la Wei \cite{wei2021diffusion}) and the hypocoercivity method. The above rate is sharp for time-independent flows, as proven in \cite{Zelati2021}. Hence, the above decay estimate for the case of $m=2$ will be our main point of reference when referring to the autonomous setting.

In $2D$, a generalization of this result to the case where $v$ admits a finite number of possibly degenerate critical points, as well as to geometries like the torus $\mathbb{T}^2$ and the periodic channel $\mathbb{T}\times[0,1]$, can be found in the aforementioned seminal work \cite{Bedrossian2017}.  We also want to highlight the work of \cite{Brue2024}, where the authors prove enhanced dissipation (and mixing) for Hamiltonian flows. In the specific case of cellular flows the authors also establish lower bounds on the mixing rate. 

\medskip
In contrast, the case of a time-dependent velocity field, although physically relevant, has been  explored far less in the mathematics literature. Recently, enhanced dissipation-type results have been established in the time-periodic setting \cite{elgindi2023}, and in the stochastic context \cite{Bedrossian2021,cooperman2025,Gess2025,Seis2025}.
 \\Our interest in this scenario is motivated by practical applications. In analytical laboratories, cell disruption is commonly achieved using vortex mixers, which generate violent, high-speed vortexing action. Starting from a state of rest, these mixers rapidly accelerate to very high speeds; this allows for fine adjustments in the mixing intensity, ranging from gentle agitation to vigorous vortexing. Just as crucial is the controlled deceleration of the mixer back to zero, ensuring that sensitive structures are not damaged during the slowdown phase. In chemical and analytical laboratories, a wide variety of stirring devices are employed, each carefully designed to achieve optimal mixing by generating a strongly time-dependent stirring field.

Motivated by these considerations, in this paper we aim at quantifying the effect of time dependent velocity fields in enhanced dissipation estimates. One major challenge in the mathematical analysis of such flows stems from the limitations of the hypocoercivity method when applied to general time-dependent velocity fields, as we will elaborate in the following discussion.\\ To this end, we consider velocity fields for which the $v(y,t)$ part separates into a space-time product of the form
\begin{equation}\label{vector-field}
	v(y,t) = \xi(t) v(y), \quad \text{with } \xi(t)\geq 0,
\end{equation}
reducing equation \eqref{fourier_adv} to 
\begin{equation}
	\label{product_adv}
	\partial_t \hat\Theta + \xi(t) v(y) ik\hat\Theta= \nu (-k^2+\partial_y^2) \hat\Theta.
\end{equation}
This class of velocity fields for the case of $\xi(t)$ being a piecewise constant random function was already considered in \cite{Vanneste2006}. In order to study the decay rate of $\|\Theta(t)\|_{L^2_y}$ in the regime where advection dominates diffusion, i.e. small viscosities, the author performs a boundary-layer analysis effectively reducing the problem to the study of a pair of coupled stochastic differential equations independent of diffusivity. In this paper, we will consider only deterministic functions $\xi(t)$ in $L^2([0,T])$ and shear velocities $v\in H^1_y(\mathbb{T})$, for which well posedness holds \cite{Evans2010} (see also \cite{crippa2024}).

In order to illustrate how the time-dependent function $\xi(t)$ affects the decay rates of the scalar concentration, we first consider the inviscid case $\nu=0$. Applying the method of stationary phases \cite{Stein}, it is easy to show the  upper bound
\begin{align*}
	\norm{\hat{\Theta}(k,t)}_{H_{y}^{-1}}\leq\min\left\{\frac{C}{(\abs{k}\Xi(t))^{1/2}},1\right\}\norm{\hat{\Theta}_0(k)}_{H_{y}^1}\,,
\end{align*}
where $\Xi(t)=\int_0^t\xi(\tau)\, d\tau\,$
(see proof in Appendix \ref{A:mix}). This coincides with the mixing estimate in \cite{Bedrossian2017} when $\xi=1$.

We observe that, if $\xi(t)$ increases sufficiently quickly (for instance, if $\xi(t)\gtrsim\alpha t^{\alpha-1}$ for some $\alpha>1$ and all $t\leq T$), the upper bound indicates that the mixing is enhanced over long times compared to the autonomous scenario. If $\xi(t)$ is bounded from below and above by constants, then the result in \cite{Elgindi2019} immediately provides an enhanced dissipation estimate for the advection diffusion equation (see discussion in Appendix \ref{A:mix}).

Recently, in some particular settings, time dependent velocity fields have been investigated in relation to their dissipation enhancing properties.
In \cite{beck}, the authors investigate the two-dimensional linearized Navier-Stokes equations around the Kolmogorov flow, defined as 
\[
V_{\rm{NS}} = \big(-a e^{-\nu t} \cos y, 0\big)^T.
\]
By neglecting the non-local component of the linearized operator 
\[
\mathcal{L}_{\nu} = -\nu \Delta - a e^{-\nu t} \cos y \partial_x (1 + \Delta^{-1}),
\]
they focus on the vorticity equation
\[
\partial_t \omega - \nu \Delta \omega - a e^{-\nu t} \cos y \partial_x \omega = 0.
\]
Using the hypocoercivity method, they establish the enhanced dissipation estimate
\begin{equation}\label{Wei-est}
	\|\omega(t)\|_X \lesssim e^{-c\sqrt{\nu}t} \|\omega_0\|_X,
\end{equation}
where $X\subset L^2$ is a Banach space (see \cite[Theorem 3.2]{beck}).

In \cite{Wei2019}, the authors revisited the non-local component of the linearized operator and established 
\eqref{Wei-est} with $X=L^2(\mathbb{T}^2_{\delta} )$ where \(\mathbb{T}^2_{\delta} = \{(x,y) : 0 < x < 2\pi \delta, \, 0 < y < 2\pi\}\) with \(\delta < 1\).

Velocities of the form $e^{-\nu t}v(y)$, where $v \in \mathcal{C}^2$, also belong to the class analysed in \cite{Coble2024}. There the authors obtain an extension of \cite{Bedrossian2017} in the sense that they recover the classical result of \eqref{classical_ed} $(m=1,2)$ for certain time-dependent flows. We also note the insightful work in \cite{seis2023bounds}, where the author, using optimal transport techniques, provides upper bounds on the exponential rates of enhanced dissipation for a broad class of velocity fields without imposing specific structural assumptions, relying only on regularity conditions such as
$
\int_{0}^{t} \|\nabla \mathbf{u}\|_{L^p} \lesssim 1 + t^\alpha.
$

\medskip

A central and novel aspect of our work lies in the derivation of time-sensitive estimates that precisely capture the evolution of the non-autonomous flow, avoiding bounding $\xi(t)$ by a constant in key steps. In order to achieve this, we embed the flow's dynamics directly into the hypocoercivity framework. For the targeted class of flows, the proof of our first result leverages time-dependent weights that are dynamically linked to $\xi(t)$, while in our second result we introduce it directly into the functional.

Our first result is an enhanced dissipation estimate for a specific class of velocity fields for which the time-dependent part satisfies polynomial lower and upper bounds.
\begin{theorem}\label{T:hyp_weights_main}
	Let $\nu\ll 1$ and $\hat{\Theta}$ be a solution of \eqref{product_adv} with $\hat{\Theta}_0$ in $L^2(\mathbb{T})$.
	Suppose that $v:\mathbb{T}\rightarrow \mathbb{R}$ is a $\mathcal{C}^2$ function admitting only a finite number of simple critical points (i.e. satisfying \eqref{simple-critical}). Define the weights $w_{\nu}(t):=(1+\nu^s t)^{-1}$ for some $s\in [0,\infty)$.  
	Let  $\beta\in [\frac{\nu}{|k|},1]$ and assume that $\xi(t)$ satisfies
	
	\begin{equation}\label{LUb}
		L(t)\leq \xi(t)\leq U(t) \qquad \forall t\in [0,T]\,,
	\end{equation}
	where 
	\begin{equation*}
		\begin{array}{rl}
			L(t) &= 
			\begin{cases}
				C^2\beta^2 & \text{for } t \leq t^{*}_{\nu, k} 
				\\
				\frac{\nu}{|k|\beta} w_{\nu}(t)^{-2} & \text{for } t \geq t^{*}_{\nu, k}
			\end{cases} 
			\\[8pt]
			& \quad \text{where} \quad t^{\ast}_{\nu,k}=\frac{1}{\nu^s}\left(\frac{|k|^{\frac 12}C\beta^{\frac 32}}{\nu^{\frac 12}}-1\right), 
			\\[15pt]
			U(t) &= w_{\nu}(t)^{-\ell} 
			\quad \text{for some } \ell \in [2,4],
		\end{array}
	\end{equation*}
	where $C$ is some positive constant. 
	Then the $L^2$ decay estimate 
	\begin{multline}
		\|\hat{\Theta}(t)\|_{L^2_y}^{2} \leq C_{\rm{ed}}\qty(1+\qty(\frac{\abs{k}}{\nu})^{1/2}) \\ \times\exp\left(-\frac{1}{4}(\beta\nu\abs{k})^{\frac 12} \int_0^t \xi(\tau)^{\frac 12}w_{\nu}(\tau)\, d\tau-2\nu k^2 t\right)  \norm{\hat{\Theta}_0}_{L^2_y}^{2} 
	\end{multline}
	holds for any $t\leq T$ where the time $T$ is arbitrary but finite and $C_{\rm{ed}}$ is some constant independent of $\nu,k$.
\end{theorem}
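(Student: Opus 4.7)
My plan is to employ the hypocoercivity framework of Villani, as adapted to autonomous shear flow enhanced dissipation in \cite{Bedrossian2017,cotizelatigallay}, but with \emph{time-dependent weights} calibrated to the modulation $\xi(t)$ and the auxiliary weight $w_\nu(t)$. The core idea is to introduce an augmented functional
\[
\Phi(t) = A\|\hat{\Theta}\|_{L^2_y}^{2} + B(t)\|\partial_y\hat{\Theta}\|_{L^2_y}^{2} + 2\Gamma(t)\,\mathrm{Re}\langle \partial_y\hat{\Theta},\, ikv(y)\hat{\Theta}\rangle_{L^2_y} + C(t)\|k v'(y)\hat{\Theta}\|_{L^2_y}^{2},
\]
where the weights $B,\Gamma,C$ are products of suitable powers of $\nu$, $|k|$, $\beta$, $\xi(t)^{-1}$ and $w_\nu(t)$, chosen so that when $\xi\equiv 1$ and $w_\nu\equiv 1$ they recover the classical hypocoercivity scaling which produces the rate $(\nu|k|)^{1/2}$ characteristic of the $m=2$ case, and so that in the non-autonomous regime the time-modulation is absorbed into the weights.

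The proof then proceeds in four steps. First, I would verify coercivity $\Phi(t)\asymp\|\hat{\Theta}\|^2$: Young's inequality on the cross term reduces this to $\Gamma(t)^2\lesssim B(t)C(t)$, and the lower bound $\xi\geq L(t)$ is precisely what keeps the ratio of the upper to lower coercivity constants under $1+(|k|/\nu)^{1/2}$ uniformly in $t\in[0,T]$ (this is the source of the constant in the final estimate). Second, I would differentiate $\Phi$ along \eqref{product_adv}. Diffusion contributes nonpositive terms, while advection generates, after integration by parts, a positive contribution $\sim \xi(t)\Gamma(t)\|kv'^{1/2}\hat{\Theta}\|^2$ — the engine of enhanced dissipation — together with error terms involving $v''$. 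Additional contributions $\dot B\|\partial_y\hat{\Theta}\|^2$, $\dot\Gamma(\cdot)$, $\dot C\|kv'\hat{\Theta}\|^2$ arise from the time-dependence of the weights, and their signs and sizes are governed by $\dot\xi/\xi$ and $\dot w_\nu/w_\nu$. Third, I would invoke the simple-critical-points assumption via a weighted Poincaré-type inequality (in the spirit of \cite[Lemma 4.2]{Bedrossian2017}) to bound $\|\hat{\Theta}\|^2$ by a combination of $\|\partial_y\hat{\Theta}\|^2$ and $\|kv'\hat{\Theta}\|^2$ on the correct scale, and close a differential inequality of the form
\[
\dot\Phi \leq -\tfrac{1}{4}(\beta\nu|k|)^{1/2}\xi(t)^{1/2}w_\nu(t)\,\Phi - 2\nu k^2\,\Phi.
\]
Fourth, Gronwall and the coercivity equivalence yield the stated decay.

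The main obstacle is Step 4 of calibration: the powers of $\xi$ and $w_\nu$ in $B,\Gamma,C$ must be chosen so that simultaneously (a) the coercivity constants stay uniformly bounded in $t$, (b) the weight-derivative terms $\dot B,\dot\Gamma,\dot C$ are absorbed by the dissipative and hypocoercive gains, and (c) the surviving rate is exactly $(\beta\nu|k|)^{1/2}\xi^{1/2}w_\nu$. Requirement (b) is what forces the upper bound $U(t)=w_\nu^{-\ell}$ with $\ell\in[2,4]$: if $\xi$ grew faster than $w_\nu^{-\ell}$, the derivative $\dot\xi$ would outrun the dissipative control built into the weights. Requirement (a), combined with the shape of $w_\nu$, explains the piecewise structure of $L(t)$: for $t\leq t^*_{\nu,k}$ a constant lower bound $C^2\beta^2$ suffices, whereas for $t\geq t^*_{\nu,k}$ the decay of $w_\nu$ forces the stronger bound $\tfrac{\nu}{|k|\beta}w_\nu^{-2}$ to prevent the ratio $\Gamma^2/(BC)$ from degenerating. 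The transition time $t^*_{\nu,k}$ is precisely the crossover where these two constraints meet.
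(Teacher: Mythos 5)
Your overall strategy --- an augmented hypocoercivity functional with time-dependent weights, coercivity via Young on the cross term, a spectral-gap inequality tied to the simple critical points, and Gr\"onwall --- is the same as the paper's. But there are two concrete gaps. First, you allow the weights $B,\Gamma,C$ to contain powers of $\xi(t)^{-1}$. Differentiating $\Phi$ then produces terms proportional to $\dot\xi$, and the hypotheses of Theorem \ref{T:hyp_weights_main} give \emph{no} control on $\dot\xi$ --- only the pointwise bounds $L(t)\le\xi(t)\le U(t)$. Your own step (b) ("the weight-derivative terms must be absorbed") therefore cannot be closed: $\dot\xi$ can be arbitrarily large and of either sign within the admissible class. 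The paper avoids this by letting the coefficients depend on time \emph{only} through $w_\nu(t)$, whose derivative is explicit and sign-definite, $\frac{d}{dt}w_\nu^n=-n\nu^s w_\nu^{n+1}$; the modulation $\xi$ enters only through the pointwise bounds, used once to absorb the Young remainder $\frac{\alpha_0^2}{\beta_0}\xi w_\nu^4 E_1$ into $\nu E_1$ (this is what forces $\ell\le 4$, not any control of $\dot\xi$) and once in the spectral-gap/reconstruction step (where $\sigma\le1$ forces the lower bound $\xi\gtrsim\frac{\nu}{|k|\beta}w_\nu^{-2}$, and the static bound $\xi\ge C^2\beta^2$ closes the absorption of the $\nu k^2\gamma_0 E_0$ error). (A $\xi$-dependent coefficient \emph{is} used in the paper, but only in Theorem \ref{T:on_off}, whose hypotheses explicitly include $\dot\xi\le C_\xi(\nu|k|)^{1/2}$ for exactly this reason.)

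Second, your claim that $\Phi(t)\asymp\|\hat\Theta\|_{L^2_y}^2$ and that "Gr\"onwall and the coercivity equivalence yield the stated decay" is not correct as stated: the upper coercivity bound for $\Phi$ involves $\alpha_0\|\partial_y\hat\Theta\|^2$, and for data merely in $L^2$ the quantity $\Phi(0)$ is not controlled by $\|\hat\Theta_0\|^2$ at all. The paper closes this by integrating $\frac{d}{dt}E_0=-2\nu E_1$ over $[0,T_{\nu,k}]$ with $T_{\nu,k}=\nu^{-s^*}|k|^{-1/2}$ and using the mean value theorem to find $t_0\in(0,T_{\nu,k})$ with $E_1(t_0)\lesssim\nu^{s^*-1}|k|^{1/2}E_0(0)$, then running Gr\"onwall from $t_0$ and paying a bounded exponential factor on $[0,t_0]$. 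This step is also the true source of the prefactor $1+(|k|/\nu)^{1/2}$ (through $\gamma_0 k^2\sim(\beta^3|k|/\nu)^{1/2}$), not the lower bound on $\xi$ as you assert. Finally, a smaller point: your cross term is written as $\mathrm{Re}\langle\partial_y\hat\Theta, ikv(y)\hat\Theta\rangle$ with $v$ rather than $v'$; the mechanism requires $\mathrm{Re}\langle ik\,\partial_yv\,\hat\Theta,\partial_y\hat\Theta\rangle$ so that its time derivative produces the definite term $-\xi(t)k^2\|\partial_yv\,\hat\Theta\|^2$ matching your last term and the spectral-gap inequality; with $v$ in place of $v'$ one gets the sign-indefinite $\int v v'|\hat\Theta|^2$ instead and the argument does not close.
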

\begin{figure}[ht]
	\label{fig1}
	\centering
	\includegraphics[width=5.0truein]{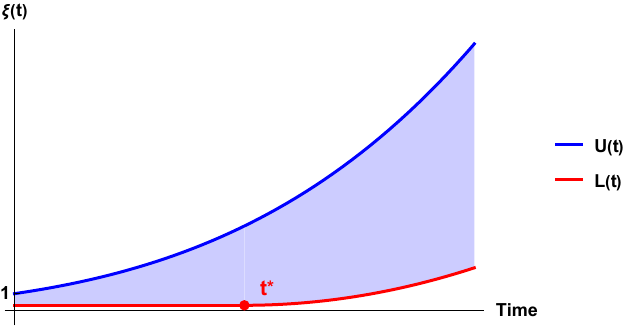}
	\caption{Illustrative sketch of the upper and lower bounds for $\xi(t)$ in Theorem \ref{T:hyp_weights_main}. The shaded area is the admissible region for $\xi(t)$.}
\end{figure}

We observe that when $w_{\nu}=1$ (formally corresponding to $s\rightarrow\infty$), for $\xi(t)$ satisfying 
\begin{equation}\label{class0}
	C^2\beta^2\leq \xi(t)\leq 1 \qquad \forall t\in [0,T]\,,
\end{equation}
the result in Theorem \ref{T:hyp_weights_main} reads
\begin{multline}\label{class0-bound}
	\|\hat{\Theta}(t)\|_{L^2_y}^{2}  \leq C_{\rm{ed}}\qty(1+\qty(\frac{\abs{k}}{\nu})^{1/2})\\ \times \exp\left(-\frac{1}{4}(\beta\nu\abs{k})^{\frac 12} \int_0^t \xi(\tau)^{\frac 12}\, d\tau-2\nu k^2 t\right)  \norm{\hat{\Theta}_0}_{L^2_y}^{2} .
\end{multline}
Note that the prefactor  $1+\qty(\frac{\abs{k}}{\nu})^{1/2}$ in the above decay estimates represents a logarithmic correction and is of purely technical origin (as observed in \cite{Bedrossian2017,cotizelati2019}), but might be removed by the employment of time-dependent weights as in \cite{Wei2019}.

The bound in \eqref{class0-bound} holds for $\xi=1$, corresponding to the class of time-independent velocities considered in \cite{cotizelatigallay, Bedrossian2017}. We note that restricting the analysis to an arbitrary finite time $T$ is necessary to prevent the bounds for $\xi(t)$ in \eqref{LUb} from blowing up. This restriction ensures that $v(t,y) \in L^2([0,T])$, thereby recovering well-posedness from the results in \cite{crippa2024}, as discussed earlier. Practically, this restriction is insignificant since the exponential decay allows the remaining norm after the validity of the estimates to become arbitrarily small by choosing an appropriate $T$.
	
	A commonly studied example is the function $\xi(t)=e^{-\nu t}$ (see \cite{beck,Coble2024,Wei2019}). Selecting the maximal time to match the purely diffusive timescale $T=\frac{1}{\nu}$, at which enhanced dissipation effects have manifested and the remaining norm is minimal (the exponent scales as $\mathcal{O}(\nu^{-1/2})$), ensures that $\xi(t)=e^{-\nu t}$ satisfies the bounds in \eqref{class0}, with the lower bound provided by the constant $e^{-1}$.
 Bounded oscillatory functions such as $\xi(t)=\frac{1}{4}\cos(t)+\frac 12$ are also admissible in the class \eqref{class0}. For all these functions, bounded from above and below by constants as in \eqref{class0}, our estimate indicates enhanced dissipation with a rate comparable in scaling to $e^{- c(|k|\nu)^{\frac 12} t}$. 
Our result in Theorem \ref{T:hyp_weights_main} however goes beyond this class, allowing growth of the velocity in time, yielding a rate that is actually faster than $e^{- c(|k|\nu)^{\frac 12} t}$. We will illustrate such a \textit{super} enhanced dissipation phenomena with an example. Fix $s=\frac 14$ and consider
\begin{equation}\label{ex}
	\xi(t)=(1+\nu^{1/4} t)^4\,,
\end{equation}
corresponding to the fastest growth allowed from the upper bound in \eqref{LUb}.
The integral in the decay rate can be computed explicitly as
\begin{align*}
	\int_0^t \xi(\tau)^{\frac 12}w_{\nu}(\tau)\, d\tau=  \int_0^t (1+\nu^{1/4}\tau)\, d\tau=t+\frac{\nu^{1/4}}{2}t^2.
\end{align*}
yielding the exponential decay 
$e^{-\bar{c}(\nu\abs{k})^{1/2}\qty(\frac{\nu^{1/4}}{2}t^2+t)}$, which is clearly faster than $e^{-\bar{c}(\nu\abs{k})^{1/2} t)}$. To see this, recall that, on the one hand, at the enhanced dissipation time $t_{\rm{ed}}=\frac{1}{\nu^{1/2}}$ the time-independent velocity field considered in \cite{cotizelatigallay} (and all the velocities of the type $\xi(t)v(y)$ where $\xi(t)$ is the class \eqref{class0})  produces a factor $\sim e^{-\abs{k}^{1/2}}$. On the other hand, for the velocity field defined by \eqref{ex}, at time $t_{\rm{ed}}=\frac{1}{\nu^{1/2}}$ we have
\begin{align*}
	\int_0^{t_{ed}} (1+\nu^{1/4}\tau)\, d\tau=\frac{1}{2}\frac{1}{\nu^{3/4}}+\frac{1}{\nu^{1/2}} \Rightarrow   \|\hat{\Theta}(t_{\rm{ed}})\|_{L^2_y}^{2}   \lesssim e^{-\frac{\abs{k}^{1/2}}{\nu^{1/4}}}.
\end{align*}
implying that, by this time, the energy is almost entirely dissipated, revealing a notable acceleration of the mixing process of orders of $\nu$ (see the discussion in Appendix \ref{A:mix}). 

The proof of Theorem \ref{T:hyp_weights_main} builds on the hypocoercivity framework developed by Villani \cite{villani}, which has been subsequently adapted to the passive scalar setting via a series of works, including \cite{beck,Bedrossian2017,Gallagher2009}. For simplicity, we assume that $v(y)$ only admits simple critical points. In order to treat the time dependent coefficient in our advection-diffusion equation, i.e. $\xi(t)$, we define a weighted functional, where polynomial weights in time have the function of balancing the possible growth of $\xi(t)$, which otherwise could make the hypocoercivity method (in its more standard formulation) inapplicable.

We observe that, differently from \cite{Wei2019} (and \cite{Coble2024}), our time-dependent weights are used \textit{dynamically}, as one can appreciate from the constraint \eqref{LUb} (see also Figure \ref{fig1}). Lastly, alternative selections of weight functions might also be permissible (see discussion in Section \ref{Section-product}, after \eqref{weight_derivative}).\\

To build intuition about the effect of  $\xi(t)$ on the decay rate, we analyse the energy and enstrophy balances:
\begin{align}
	\frac{1}{2}\dv{}{t}\norm{\hat{\Theta}}^2_{L^2_y} &= -\nu \qty( \norm{\partial_{y} \hat{\Theta}}^2_{L^2_y}+k^2\norm{ \hat{\Theta}}^2_{L^2_y} ), \\
	\frac{1}{2}\dv{}{t} \norm{\partial_y\hat{\Theta}}^2_{L^2_y} &= -\nu\qty( \norm{\partial^2_{y} \hat{\Theta}}^2_{L^2_y}+k^2\norm{ \partial_y\hat{\Theta}}^2_{L^2_y} )- \xi(t)\inner{ik\partial_y v \hat{\Theta}}{\partial_y \hat{\Theta}}.
\end{align}

The first balance shows that the decay of the solution's $L^2$-norm is controlled by the gradient's norm and the viscosity. Using only this balance, decay occurs on a diffusive timescale, as guaranteed by the Poincaré inequality, when applicable. 

The second balance governs the gradient norm's growth. For enhanced dissipation to occur, the gradient's time derivative must be positive for some time. However, by the first balance, it cannot remain positive indefinitely, as gradients eventually decay. This necessitates at least one sign change in the rate of growth, as noted for specific examples in \cite[Fig. 4]{Young82}, \cite[Fig. 3]{Foures2014} and \cite[Fig. 2]{Miles_2018}.  Here, the first term on the right is strictly negative, so any gradient growth must stem from the sign-indefinite term, tied to the transport operator and modulated by $\xi(t)$. Increasing $\xi(t)$ accelerates growth, while decreasing $\xi(t)$ dampens it.

In summary, larger gradients yield faster norm decay via the first balance. Since gradient growth depends on $\xi(t)$, peak gradients can be reached earlier for increasing $\xi(t)$ and later (or lower peaks altogether) for decreasing $\xi(t)$. This heuristic argument is captured precisely in Theorem \ref{T:hyp_weights_main}.

In order to accommodate functions that might not be bounded from below and therefore might switch on and off at certain intervals in time, we prove
\begin{theorem}\label{T:on_off}
	Let $\nu\ll 1$ and $\Theta$ be a solution of \eqref{product_adv} with $\hat{\Theta}_0$ in $L^2(\mathbb{T})$.
	Suppose that $v:\mathbb{T}\rightarrow \mathbb{R}$ is a $\mathcal{C}^2$ function admitting only a finite amount of simple critical points (i.e. satisfying \eqref{simple-critical}). Suppose that $\xi(t)$ satisfies 
	\begin{equation}\label{class2}
		\xi(t) \leq 1, \quad \dv{}{t}\xi(t) \leq {C_{\xi}}(\nu\abs{k})^{1/2},
	\end{equation}
	for some small constant $C_{\xi}$.
	Then for each $\nu$ and $k$ there exists a $C_\xi' \geq \frac{\nu}{\abs{k}}$ such that the $L^2$ decay estimate 
	\begin{multline}
		\label{onoff_decay}
		\|\hat{\Theta}(t)\|_{L^2_y}^{2}  \leq C_{\rm{ed}}\qty(1+\qty(\frac{\abs{k}}{\nu})^{1/2}) \\
		\times\exp\left(-C_\xi'(\nu\abs{k})^{\frac 12} \int_0^t \xi(\tau)^{3}\, d\tau-2\nu k^2 t\right)  \norm{\hat{\Theta}_0}_{L^2_y}^{2} 
	\end{multline}
	holds for any $t\leq T$ where the time $T$ is arbitrary but finite and $C_{\rm{ed}},C_\xi'$ are constants independent of $\nu,k$.
\end{theorem}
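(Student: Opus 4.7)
The plan is to adapt the Villani hypocoercivity machinery as in \cite{Bedrossian2017,cotizelatigallay,beck}, but now with $\xi(t)$ embedded \emph{directly} into the weights of the augmented functional (rather than external polynomial weights as in Theorem~\ref{T:hyp_weights_main}). Concretely, I would study
\begin{equation*}
\Phi(t) \;=\; \|\hat\Theta\|_{L^2_y}^{2} \;+\; \alpha\,\xi(t)^{a}\|\partial_y\hat\Theta\|_{L^2_y}^{2} \;+\; 2\beta\,\xi(t)^{b}\,\mathrm{Re}\inner{ikv\hat\Theta}{\partial_y\hat\Theta} \;+\; \gamma\,\xi(t)^{c}\|kv\hat\Theta\|_{L^2_y}^{2},
\end{equation*}
with $\alpha,\beta,\gamma$ chosen as the standard negative powers of $\nu$ and $|k|$ known from the autonomous setting (producing, as usual, the prefactor $1+(|k|/\nu)^{1/2}$ in the equivalence $\Phi\simeq \|\hat\Theta\|_{L^2_y}^{2}$), and with exponents $a,b,c\geq 0$ to be tuned. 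The natural guess is $b=2$, so that when the cross term is differentiated against the transport part of $\partial_t\hat\Theta$ the ensuing coercivity carries precisely the power $\xi^{b+1}=\xi^{3}$.

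Computing $\frac{d}{dt}\Phi$ produces two families of terms. The first is the canonical hypocoercivity output: diffusion yields the sign-definite dissipation $-\nu\xi^{a}\|\partial_y^{2}\hat\Theta\|_{L^2_y}^{2}-\nu\|\partial_y\hat\Theta\|_{L^2_y}^{2}-\nu k^{2}\|\hat\Theta\|_{L^2_y}^{2}$, while the cross term interacts with the transport to give the coercivity $-c_{0}\,\xi^{3}\,\|kv\hat\Theta\|_{L^2_y}^{2}$; cross-coupling contributions such as $\xi^{a+1}\,\mathrm{Re}\inner{ikv'\hat\Theta}{\partial_y\hat\Theta}$ are absorbed by Young's inequality in the customary manner. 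The second family consists of the $\dot\xi$-contributions from differentiating the weights $\xi^{a},\xi^{b},\xi^{c}$; these are sign-indefinite and would, if left unchecked, destroy coercivity precisely where $\xi$ is small. Here the hypothesis $\dot\xi\leq C_{\xi}(\nu|k|)^{1/2}$ with $C_{\xi}\ll 1$ enters decisively: a further Young splitting routes each $\dot\xi$-piece partly into the parabolic dissipation and partly into the transport coercivity $\xi^{3}\|kv\hat\Theta\|_{L^2_y}^{2}$, with absorbing constants kept uniformly small thanks to $C_{\xi}\ll 1$.

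To pass from $\|kv\hat\Theta\|_{L^2_y}^{2}$ to $\|\hat\Theta\|_{L^2_y}^{2}$ I would invoke the critical-point decomposition of \cite{Bedrossian2017,cotizelatigallay}: the torus is split into narrow neighbourhoods of each simple critical point $y_{j}$ (where \eqref{simple-critical} together with the Poincaré inequality lets us transfer control to the gradient term of $\Phi$) and the complementary set where $|v(y)|\gtrsim(\nu/|k|)^{1/4}$. Tracking the net power of $\xi$ through this splitting leaves the $\xi^{3}$ factor intact, yielding
\begin{equation*}
\frac{d}{dt}\Phi(t) \;\leq\; -\,C'_{\xi}\,\xi(t)^{3}\,(\nu|k|)^{1/2}\,\Phi(t) \;-\; 2\nu k^{2}\,\Phi(t),
\end{equation*}
from which Gronwall and the functional equivalence deliver \eqref{onoff_decay}.

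The main obstacle will be the exponent arithmetic. The triple $(a,b,c)$ must be chosen simultaneously so that (i) $\Phi$ remains equivalent to $\|\hat\Theta\|_{L^2_y}^{2}$ uniformly in $t$ — trivial from above because $\xi\leq 1$, but the reverse inequality requires that the coercivity in $\|kv\hat\Theta\|^{2}$ can be wielded to dominate the auxiliary pieces; (ii) every $\dot\xi$-error remains absorbable even as $\xi\to 0$, where some of the absorbing quantities $\xi^{q}$ themselves degenerate; and (iii) the critical-point cutoff produces exactly a net $\xi^{3}$ factor, neither higher nor lower. This delicate calibration is precisely why the assumption is placed at the enhanced-dissipation scale $(\nu|k|)^{1/2}$ and why the resulting rate sits at $\xi^{3}$ rather than the naive $\xi^{1/2}$ suggested by a pure time-rescaling heuristic.
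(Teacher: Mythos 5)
Your overall architecture is the paper's: its proof also runs hypocoercivity on an augmented functional whose coefficients carry powers of $\xi(t)$ --- explicitly $\alpha_0(t)\propto\xi$, $\beta_0(t)\propto\xi^{2}$, $\gamma_0(t)\propto\xi^{3}$, obtained by setting $\beta(t)=C_\xi\,\xi(t)^{2}$ in \eqref{O:beta_def} --- so your guess $b=2$, the mechanism of absorbing the $\dot\xi$-errors into the parabolic dissipation $\nu E_1$ and into the $\xi^{3}$-weighted transport coercivity via the hypothesis $\dot\xi\leq C_\xi(\nu\abs{k})^{1/2}$, and the insistence that the spectral-gap parameter be chosen independently of $\xi$ (so that $\xi\to 0$ is permitted) are all exactly the steps the paper takes.

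One piece of your setup is wrong as written, however, and would derail the computation if carried out literally: the cross term and the final term must be built from $\partial_y v$, not from $v$. The coercivity comes from the commutator $[\partial_y,\,ik\xi v]=ik\xi\,\partial_y v$: differentiating $\Re\inner{ik\partial_y v\,\theta}{\partial_y\theta}$ against the transport produces the sign-definite term $-\xi k^{2}\norm{\partial_y v\,\theta}_{L^2_y}^{2}$, whereas with $v$ in place of $\partial_y v$ the transport contribution reduces to $-\xi k^{2}\int_{\mathbb{T}} v\,\partial_y v\,\abs{\theta}^{2}\,dy$, which has no sign. Relatedly, the spectral-gap inequality of Lemma \ref{L:spectral_gap} controls $E_0$ by $E_1$ and $\norm{\partial_y v\,\theta}_{L^2_y}^{2}$, and the hypothesis \eqref{simple-critical} concerns the critical points of $v$ (zeros of $\partial_y v$), not zeros of $v$; indeed $\norm{v\,\theta}_{L^2_y}^{2}$ cannot control $\norm{\theta}_{L^2_y}^{2}$ in any case, since replacing $v$ by $v+c$ only multiplies $\hat{\Theta}$ by a phase and must leave the decay rate unchanged. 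A smaller point: the prefactor $1+(\abs{k}/\nu)^{1/2}$ does not come from an equivalence $\Phi\simeq\norm{\hat\Theta}_{L^2_y}^{2}$, which fails at $t=0$ for data that are merely $L^2$ (where $E_1$ may be infinite); it comes from the standard short-time regularization step, i.e.\ integrating $\tfrac12\tfrac{d}{dt}E_0=-\nu E_1$ over $[0,(\nu\abs{k})^{-1/2}]$, extracting by the mean value theorem a time $t_0$ with $E_1(t_0)\lesssim\nu^{-1/2}\abs{k}^{1/2}E_0(0)$, and starting the Gronwall argument from $t_0$.
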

The value of this theorem is to provide the energy decay estimate for velocity fields that are very small or turned off for some time. In Section \ref{intermittent-flow}, we will discuss in details velocities of the type $\xi(t)v(y)$ where 
$$
\xi_A(t) = 
\begin{cases}
	\nu^{\frac 1 2} t, & t \in \qty[0, \frac{1}{\nu^{\frac 1 2}}], \\
	(1 + \nu^{\frac 14} (t - \frac{1}{\nu^{\frac 1 2}}))^4, & t \in \qty[\frac{1}{\nu^{\frac 1 2}}, \frac{1}{\nu^{\frac 3 4}}].
\end{cases}
\;\text{and}\;
\xi_B(t) =
\begin{cases}
	\nu^{1/2} t & t \in \qty[0, \frac{1}{\nu^{1/2}}], \\
	1,& t \in \qty[\frac{1}{\nu^{\frac 1 2}}, \frac{1}{\nu^{\frac 3 4}}], \\
	\frac{\nu t - 1}{\nu^{\frac 1 4} - 1},& t \in \qty[\frac{1}{\nu^{\frac 3 4}}, \frac{1}{\nu}].
\end{cases}
$$

The flow corresponding to \(\xi_A\) exemplifies a transition from a slow flow amenable to Theorem \ref{T:on_off} to a fast flow satisfying the assumptions of Theorem \ref{T:hyp_weights_main}. We demonstrate that the decay rate in this setting can be derived via a ``gluing'' argument. Meanwhile, the flow corresponding to \(\xi_B\) serves as an illustrative example of a velocity profile that activates initially, remains constant over a specified interval, and then deactivates, reflecting a typical scenario in laboratory mixers.
As we will demonstrate in Section \ref{intermittent-flow}, applying Theorem \ref{T:hyp_weights_main}/Theorem \ref{T:on_off} to these profiles results in a decay rate that differs only slightly from the rate observed in the autonomous case of $\xi(t) = 1$. These differences are primarily reflected in lower-order corrections producing small scaling variations, which emerge only after most of the energy has already dissipated.
Furthermore, for the flow corresponding to $\xi_{B}$, we observe that the exact shape of the turn-off function has little influence on the dissipation rate  after the enhanced dissipation timescale is reached.

To provide another example, in the time interval $[0,\frac{1}{\nu}]$ we consider $\xi(t)=e^{-t}$. While Theorem \ref{T:hyp_weights_main} does not apply, since this function is only bounded from below by the $\nu$-dependent function $e^{-\frac{1}{\nu}}$, it satisfies \eqref{class2} and therefore Theorem \ref{T:on_off} is applicable.

This then yields, as expected, a slowed dissipation rate compared to the autonomous case.

The rest of the paper is devoted to the proof of the main results: Theorem \ref{T:hyp_weights_main} is proven in Section \ref{Section-product}, while Section \ref{S:inter} is devoted to the proof of Theorem \ref{T:on_off} and to the computation of rates for some example flows.
\\
\\
In this manuscript we occasionally use the symbols $\lesssim$, $\gtrsim$ to denote inequalities which hold up to a universal positive constant.

\section{Enhanced dissipation for accelerating velocity fields}\label{Section-product}

We consider velocity fields 
\begin{equation}
	\label{setup_1}
	\bm{u}(x,y,t)=(\xi(t)v(y), 0)^T,
\end{equation}
for which the advection-diffusion equation \eqref{adv} reduces to 
\begin{equation}
	\partial_t \Theta +\xi(t)v(y)\partial_x\Theta-\nu \Delta \Theta=0\,.
\end{equation}
We assume 
\begin{equation*}
	\int_{\mathbb{T}}\Theta_{0}(x,y)\,dx=0 \quad \mbox{ for all } y\in \mathbb{T}
\end{equation*}
implying 
\begin{equation}
	\label{avg_free}
	\int_{\mathbb{T}}\Theta(x,y)\,dx=0 \quad \mbox{ for all } y\in \mathbb{T}
\end{equation}
since these preserved are conserved through the evolution. 
As outlined in the introduction, the structure of the shear flow and the domain's periodicity make it natural to transition to the Fourier transform in $x$, leading to the equation
\begin{equation}
	\label{product_adv_sec}
	\partial_t \hat{\Theta} +\xi(t)v(y) ik \hat{\Theta}=\nu (-k^2+\partial_y^2) \hat{\Theta},
\end{equation}
where $\hat{\Theta}(k, y,t)=\frac{1}{2\pi}\int_{\mathbb{T}}\Theta(x, y,t) e^{-ikx}\,dx$ are the Fourier coefficients in the Fourier series expansion $\Theta(x,y,t)=\sum_{k\neq 0}\hat{\Theta}(y,t) e^{ikx}$. 
By the transformation
\begin{align}
	\label{transformation}\hat{\Theta}(k, y, t)=e^{-\nu k^2 t}\theta(k,y,t), \quad  k\in\mathbb{Z}, \quad y\in \mathbb{T}, \quad t>0\end{align}
equation turns \eqref{product_adv_sec} into the hypoelliptic equation
\begin{equation}\label{Heu:adv-eq}
	\begin{array}{rrl}
		\partial_t \theta +\xi(t)v(y) ik \theta&=&\nu \partial_y^2 \theta\,,\\
		\theta(x,y,0)&=&\theta_0(x,y)\,,
	\end{array}
\end{equation}
which hides the horizontal diffusion term $-\nu k^2 \hat{\Theta}$.\\
In the rest, for two complex valued functions $\phi,\psi$,  we will denote with
\begin{align}
	\label{setup2}
	\inner{\phi}{\psi}=\int_\mathbb{T} \phi\bar{\psi} dy.
\end{align}
the standard inner product in $L^2_y(\mathbb{T})$.

We begin by introducing the augmented functional 
\begin{equation}\label{aug-functional}
	\Phi \coloneqq \frac{1}{2}\bigl[E_0(t) \;+\; \alpha_0 (w_{\nu}(t))^{3} E_1(t) \;+\; 2\beta_0 (w_{\nu}(t))^{2} E_3(t) \;+\; \gamma_0 w_{\nu}(t) k^2 E_4(t)\bigr],
\end{equation}
where 
\begin{equation}\label{Es}
	\begin{aligned}
		& E_0(t) = \|\theta(\cdot,t)\|_{L^2_y}^{2} , \qquad
		E_1(t) = \|\partial_y\theta(\cdot,t)\|_{L^2_y}^{2} , \qquad
		E_2(t) = \|\partial_y^2\theta(\cdot,t)\|_{L^2_y}^{2} , \\[5pt]
		&E_3(t) = \Re\langle ik\partial_y v\theta(\cdot,t), \partial_y \theta(\cdot,t)\rangle, \qquad
		E_4(t) = \|\partial_y v\,\theta(\cdot,t)\|_{L^2_y}^{2} .
	\end{aligned}
\end{equation}
The time-dependent weights, parametrized by $\nu$, are defined as
\begin{equation}\label{weightdefinition}
	w_{\nu}(t) = \frac{1}{1+\nu^s t}, \quad \text{for some fixed } s \in [0,\infty).
\end{equation}
As we will see later in the proof of Theorem \ref{T:hyp_weights_main}, these weights are carefully chosen to counteract the (potentially) increasing values of $\xi(t)$ over time.
They satisfy the relation
\begin{equation}\label{weight_derivative}
	\frac{d}{dt}(w_{\nu}(t)^n) = -n\,\nu^s w_{\nu}(t)^{n+1}, \quad \text{for all } n \in \mathbb{N}.
\end{equation}
Before we move on, we want to briefly discuss the structure of the weights. The crucial property of the weights is their decay, as captured in \eqref{weight_derivative}. The exact polynomial structure on the other hand was chosen to introduce this method via the most simple case, and we expect that there may exist others that work similarly.
The parameter $s$ allows us to control the bounds of Theorem \ref{T:hyp_weights_main} in terms of timescales measured in powers of the viscosity. This is convenient, as enhanced dissipation and diffusion are usually measured in terms of these scales.

The goal is to show that the functional $\Phi$ satisfies an estimate of the type (see Lemma \ref{L:functionaldecayprop})
$$\frac{d}{dt}\Phi(t)\lesssim -(\xi(t) \nu |k|)^{\frac 12}w_{\nu}(t)\Phi(t) \qquad \mbox{ for any} \quad t\leq T\,.$$
This will yield our result upon the application of Grönwall's Inequality. \\

The coefficients $\alpha_0$, $\beta_0$, and $\gamma_0$ must satisfy the following inequality
\begin{equation}\label{condition_param_2}
	\frac{\beta_0^2}{\alpha_0} \leq \frac{\gamma_0}{16},
\end{equation}
and we make the choice
	\begin{align}
		\label{beta_def}
		\alpha_0=(\beta_0 \nu)^{1/2}, \quad \beta_0=\frac{\beta}{\abs{k}}, \quad\gamma_0=16\frac{\beta_0^{3/2}}{\nu^{1/2}}
	\end{align}
	where $\beta \in \qty[\frac{\nu}{\abs{k}},1]$ is some constant to be fixed later.\\
\\
Notice that functional $\Phi$ is coercive in the following sense
\begin{align}
	\label{coercivity}
	\frac{1}{8}\qty(4E_0+3\alpha_0 w_{\nu}^{3}E_1+3\gamma_0 k^2 w_{\nu}E_4)\leq \Phi \leq\frac{1}{8}\qty(4E_0+5\alpha_0 w_{\nu}^{3} E_1+5\gamma_0 k^2 w_{\nu}E_4).
\end{align}
Using relation \eqref{condition_param_2},
this is obtained by a direct application of the Cauchy-Schwarz and Young's inequalities as
\begin{align*}
	2\beta_0w_{\nu}^{2} \lvert E_3 \rvert \leq \frac{\alpha_0}{4}w_{\nu}^{3}E_1+\frac{4\beta_0^2k^2}{\alpha_0 }w_{\nu}E_4\leq \frac{\alpha_0}{4}w_{\nu}^{3}E_1+\frac{\gamma_0k^2}{4}w_{\nu}E_4.
\end{align*}

The crucial ingredient, in order to deduce the evolution of the $L^2$-norm of the solution, is the following quantitative bound on $\Phi$:

\begin{proposition}\label{P:func_decay}
	Let $\nu$ and $k$ such that $\beta\in [\frac{\nu}{|k|},1]$ and assume that $\xi(t)$ satisfies
	\begin{align}
		\label{gamma_bounds}
		L(t)\leq \xi(t)\leq U(t)
	\end{align}
	where 
	\begin{equation*}
		\begin{array}{rl}
			L(t) &= 
			\begin{cases}
				C^2\beta^2 & \text{for } t \leq t^{*}_{\nu, k} 
				\\
				\frac{\nu}{|k|\beta} w_{\nu}(t)^{-2} & \text{for } t \geq t^{*}_{\nu, k}
			\end{cases} 
			\\[8pt]
			& \quad \text{where} \quad t^{\ast}_{\nu,k}=\frac{1}{\nu^s}\left(\frac{|k|^{\frac 12}C\beta^{\frac 32}}{\nu^{\frac 12}}-1\right), 
			\\[10pt]
			U(t) &= w_{\nu}(t)^{-\ell} 
			\\[8pt]
			& \quad \text{for some } \ell \in [2,4]
		\end{array}
	\end{equation*}
	with $s\in [0,\infty)$ and some positive constant $C$. 
	Then the energy functional $\Phi(t)$ satisfies the following estimate
	\begin{equation}
		\Phi(t)\leq \exp\left(-\frac{1}{4}(\beta\nu\abs{k})^{\frac 12} \int_0^t \xi(\tau)^{\frac 12}w_{\nu}(\tau)\, d\tau\right)\Phi(0)
	\end{equation}
	for any $t\leq T$ where the time $T$ is arbitrary but finite. 
\end{proposition}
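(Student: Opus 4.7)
The plan is to reduce the proposition to the pointwise differential inequality
\begin{equation*}
\dv{}{t}\Phi(t) \;\leq\; -\frac{1}{4}(\beta\nu\abs{k})^{1/2}\xi(t)^{1/2}w_{\nu}(t)\,\Phi(t),
\end{equation*}
from which the stated estimate follows by a single application of Grönwall's inequality. The strategy is the standard hypocoercivity one, but tracked carefully through the two time-dependent objects $\xi(t)$ and $w_{\nu}(t)$.

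First, using \eqref{Heu:adv-eq} and integration by parts in $y$, and exploiting that the multiplication operator $\xi(t) v(y) ik$ is skew-Hermitian on $L^2_y$, I would compute the evolution of each $E_i$: one obtains $\dv{}{t}E_0 = -2\nu E_1$, $\dv{}{t}E_1 = -2\nu E_2 - 2\xi(t) E_3$, and longer expressions for $\dv{}{t}E_3$ and $\dv{}{t}E_4$ containing cross terms of order $\xi$, $\xi^2$, and $\nu$. Assembling these contributions with the weight identity \eqref{weight_derivative} into $\dv{}{t}\Phi$, the dominant coercive term is $-2\beta_0\,\xi(t)\,k^2 w_\nu(t)^2 E_4$, produced by the skew-symmetric portion of $\dv{}{t}E_3$; this is the sole term responsible for enhanced dissipation, and its $k^2$ factor together with \eqref{coercivity} is what furnishes the target rate.

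The main technical work is then to show that the remaining sign-indefinite contributions can be absorbed. I would split each cross term via Cauchy-Schwarz and Young's inequality into fractions of one of the coercive pieces $\alpha_0 w_\nu^3 E_1$, $\gamma_0 k^2 w_\nu E_4$, or of the diffusive leftovers $\alpha_0 w_\nu^3 \nu E_2$ and $\gamma_0 w_\nu k^2 \nu \norm{\partial_y(\partial_y v\,\theta)}_{L^2_y}^{2}$. The scaling choices in \eqref{beta_def} together with the algebraic constraint \eqref{condition_param_2} are engineered precisely so that this absorption closes. In particular, the remainders from $\dv{}{t}E_3$ that are quadratic in $\xi$ scale like $\xi(t)^{2}$ times a coercive quantity, and dominating them forces exactly the \emph{upper} bound $\xi(t)\leq w_{\nu}(t)^{-\ell}$ with $\ell\in[2,4]$. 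The contributions of the weight derivatives $-n\nu^s w_\nu^{n+1}$ carry a favourable sign or are dominated by the diffusive pieces, so the polynomial decay of $w_{\nu}$ does not spoil the estimate. I expect the most delicate step to be handling the quadratic-in-$v$ piece of $\dv{}{t}E_3$ of the form $\xi(t)k^2\Re\inner{v\partial_y v\,\theta}{\partial_y\theta}$, which after integration by parts must be controlled in terms of $\norm{v''}_{L^\infty}$ and the nondegeneracy hypothesis \eqref{simple-critical}.

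Once every remainder is absorbed, what remains is an algebraic inequality of the form
\begin{equation*}
c_1\,\xi(t)\,k^2 w_\nu^2 E_4 \;+\; c_2\,\nu\alpha_0 w_\nu^3 E_1 \;\geq\; \frac{1}{4}(\beta\nu\abs{k})^{1/2}\xi(t)^{1/2}w_\nu(t)\,\Phi(t),
\end{equation*}
which via \eqref{coercivity} reduces to a pointwise condition relating $\xi(t)$ and $w_{\nu}(t)$. Solving this condition with the parameter choices \eqref{beta_def} produces exactly the \emph{lower} bound $\xi(t)\geq L(t)$: the two regimes, the constant $C^2\beta^2$ for $t\leq t^{*}_{\nu,k}$ and $\tfrac{\nu}{\abs{k}\beta}\, w_\nu(t)^{-2}$ for $t\geq t^{*}_{\nu,k}$, correspond to which of the two coercive pieces $E_4$ or $E_1$ is the limiting factor at a given instant. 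The crossover time $t^{*}_{\nu,k}$ is precisely the moment at which $\tfrac{\nu}{\abs{k}\beta} w_\nu(t)^{-2}$ reaches $C^2\beta^2$, so the two branches of $L(t)$ join continuously. Grönwall's inequality applied to the resulting differential inequality then yields the proposition.
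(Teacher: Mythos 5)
Your skeleton (reduce to a pointwise differential inequality for $\Phi$ and apply Gr\"onwall; absorb cross terms by Young's inequality using the scalings \eqref{beta_def}, \eqref{condition_param_2}; identify the upper bound $\xi\leq w_{\nu}^{-\ell}$ as the condition letting the diffusive $\nu E_1$ term survive) matches the paper's proof of Lemma \ref{L:functionaldecayprop}. But your closing step has a genuine gap. You assert that
\begin{equation*}
c_1\,\xi(t)\,k^2 w_\nu^2 E_4 \;+\; c_2\,\nu\alpha_0 w_\nu^3 E_1 \;\geq\; \tfrac{1}{4}(\beta\nu\abs{k})^{1/2}\xi(t)^{1/2}w_\nu(t)\,\Phi(t)
\end{equation*}
``via \eqref{coercivity} reduces to a pointwise condition relating $\xi$ and $w_\nu$.'' It does not: by \eqref{coercivity}, $\Phi\geq\frac12 E_0$, and $E_0$ cannot be bounded pointwise by any combination of $E_1$ and $E_4$ alone --- take $\theta$ slowly varying and concentrated near a critical point of $v$, where $\partial_y v$ vanishes; then $E_1$ and $E_4$ are both small while $E_0$ is of order one. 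The missing ingredient is the spectral-gap inequality of Lemma \ref{L:spectral_gap}, $\sigma^{1/2}E_0\lesssim C_{sp}[\sigma E_1+E_4]$ for $\sigma\in(0,1]$, which is precisely where the nondegeneracy hypothesis \eqref{simple-critical} enters --- not, as you suggest, in controlling a quadratic-in-$v$ remainder of $\dv{}{t}E_3$; in fact the transport contributions to $\dv{}{t}E_3$ combine exactly to $-\xi(t)k^2E_4$ with no leftover (Lemma \ref{L:energy}). The paper applies this inequality with $\sigma=\nu/(k^2\beta_0\xi(t)w_\nu(t)^2)$ both to absorb the diffusive error $\nu k^2\gamma_0 c_\infty w_\nu E_0$ generated by the $\norm{\partial_y^2v\,\theta}_{L^2_y}^2$ terms and to produce the coercive $E_0$ contribution $\frac14(\beta_0\xi\nu)^{1/2}\abs{k}w_\nu E_0$ without which $\Phi$ cannot be reconstructed on the left-hand side.

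This omission also leads you to misattribute the origin of the lower bound $L(t)$. In the paper both branches come from the spectral-gap step: the admissibility constraint $\sigma\leq1$ yields the dynamic bound \eqref{gamma_bound-dynamic}, $\xi(t)\geq\frac{\nu}{k^2\beta_0}w_\nu(t)^{-2}$, while the requirement that the balancing parameter $A$ be positive (so the bad $E_0$ term is truly absorbed) yields the static bound \eqref{gamma_bound-static}, which with the choice of $\gamma_0$ becomes $\xi(t)\geq C^2\beta^2$. You are right that $t^{*}_{\nu,k}$ is the crossover time at which the two bounds coincide, but the mechanism is the spectral gap, not a direct comparison of ``which coercive piece is limiting.'' As written, your argument cannot close without Lemma \ref{L:spectral_gap}.
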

For the proofs of Proposition \ref{P:func_decay} and Theorem \ref{T:hyp_weights_main} the following energy balances are crucial. They result form standard energy methods and proofs are provided in Lemma \ref{L:energy} in the Appendix. We collect them here as a central reference
	\begin{align}
		\frac{1}{2}\frac{d}{dt}E_0(t) &= -\nu E_1(t), \label{e0}\\[5pt]
		\frac{1}{2}\frac{d}{dt}E_1(t) &= -\nu E_2(t) - \xi(t)E_3(t), \label{e1}\\[5pt]
		\frac{1}{2}\frac{d}{dt}E_3(t) &= -\tfrac{1}{2}\xi(t)k^2E_4(t) \;-\; \nu \Re\langle ik\partial_y v\partial_y \theta, \partial_y^2\theta\rangle \;-\; \tfrac{1}{2}\nu \Re\langle ik\partial_y^2v\theta, \partial_y^2\theta\rangle, \label{e3}\\[5pt]
		\frac{1}{2}\frac{d}{dt}E_4(t) &= -\nu \|\partial_y v\partial_y\theta\|_{L^2_y}^{2}  \;-\; 2\nu \Re\langle \partial_y v\partial_y^2v\theta, \partial_y\theta\rangle. \label{e4}
\end{align} 
We now assume Proposition \ref{P:func_decay} to hold and prove Theorem \ref{T:hyp_weights_main}.
\begin{proof}[Proof of Theorem \ref{T:hyp_weights_main}]
	Following the arguments for the proof in \cite[~Corollary 3.3]{cotizelatigallay}, 
	we introduce the timescale
	\begin{align}
		T_{\nu,k}=\frac{1}{\nu^{s^*} \abs{k}^{1/2}},
	\end{align}
	where $s^*=\min\{\frac{1}{2},s\}$.  The reason for the introduction of this auxiliary $s^*$ is purely technical, as it allows a connection to the autonomous setting in the formal limit $s \rightarrow \infty$. It can be avoided if one restricts to $s\leq \frac{1}{2}$.\\
	Upon integration of the energy estimate \eqref{e0}, the mean-value theorem ensures the existence of a time $t_0 \in (0,  T_{\nu,k})$, such that
	\begin{align}
		2E_1(t_0) = \frac{E_0 (0) - E_0 (T_{\nu,k})}{\nu T_{\nu,k}} \leq \nu^{s^*-1}\abs{k}^{1/2} E_0(0) .
	\end{align}
	This leads to the following bound
	\begin{align}
		\alpha_0 (w_{\nu}(t_0))^{3}E_1(t_0)\leq \frac{\nu^{s^*-1/2}\beta^{1/2}(w_\nu(t_0))^{3}}{2}  E_0(0),
	\end{align}
	where we have used the fact that we defined $\alpha_0=(\beta_0 \nu)^{1/2}$ and $\beta_0=\beta \abs{k}^{-1}$ (see\eqref{beta_def}).  By \eqref{coercivity} we then have the estimate
	\begin{align}
		\Phi(t_0) &\leq \frac{1}{8}[4 E_0(t_0)+5\alpha_0 (w_{\nu}(t_0))^{3}E_1(t_0)+5\gamma_0 k^2 (w_{\nu}(t_0)) E_4(t_0)]\\
		&\leq K_0 (1 + \nu^{s^*-1/2} +\gamma_0 k^2)E_0(0),
	\end{align}
	where we used again \eqref{e0} to bound $E_4(t_0)\leq \|\partial_y v\|_{L^{\infty}}^2 E_0(t_0)< \|\partial_y v\|_{L^{\infty}}^2E_0(0)$ and that the weight function $w_{\nu}$ is bounded from above by a constant. We emphasize that the constant $K_0> 0$ is independent of $\nu$ and $k$, but it does depend on $\beta$ and $\lVert \partial_y v \rVert_{L^\infty}^2$. Then, for times $t\in (t_0,T]$, we apply Proposition \ref{P:func_decay} to find
	\begin{align}
		\frac{1}{2}E_0(t) 
		&\leq \Phi(t) \leq
		\exp\left(-\frac{1}{4}(\beta\nu\abs{k})^{\frac 12} \int_{t_0}^t \xi(\tau)^{\frac 12}w_{\nu}(\tau)\, d\tau\right) \Phi(t_0)
	\end{align}
	and by the above estimate for $\Phi(t_0)$ we then have
	\begin{multline*}
		\frac{1}{2}E_0(t) \leq K_0 (1 + \nu^{s^*-1/2}+ \gamma_0 k^2)\exp\left(\frac{1}{4}(\beta\nu\abs{k})^{\frac 12} \int_0^{t_0} \xi(\tau)^{\frac 12}w_{\nu}(\tau)\, d\tau\right) \\ \qquad \qquad\qquad\times \exp\left(-\frac{1}{4}(\beta\nu\abs{k})^{\frac 12} \int_0^t \xi(\tau)^{\frac 12}w_{\nu}(\tau)\, d\tau\right)  E_0 (0)\,,
	\end{multline*}
	where we wrote $\int_{t_0}^{t}=\int_0^{t}-\int_{0}^{t_0}$.
	
	Now note that for $t\leq t_0$ we have the bound
	
	\begin{align*}
		w_{\nu}(t)\xi(t)^{\frac 12}\leq w_{\nu}(t)^{1-\frac{\ell}{2}}\leq 2^{\frac{1}{2}\ell-1} \quad \mbox{ for } \ell> 2,
	\end{align*}
	where we used that $t_0 \leq T_{\nu, \ell}=\frac{1}{\nu^{s^*}|k|^{\frac 12}}$ and $|k|\geq 1$. 
	By this upper bound on $t_0$ we conclude that
	\begin{multline*}
		\frac{1}{2}E_0(t) \leq K_0 (1 + \nu^{s^*-1/2} + \gamma_0 k^2)  \exp\left(\frac{1}{4} \beta^{\frac 12}\nu^{\frac 12-s^{\ast}}2^{\frac 12 \ell-1} \right)\\
		\times \exp\left(-\frac{1}{4}(\beta\nu\abs{k})^{\frac 12} \int_0^t \xi(\tau)^{\frac 12}w_{\nu}(\tau)\, d\tau\right)  E_0 (0).
	\end{multline*}
	Now recalling our choice of $\gamma_0$ in \eqref{beta_def} we obtain
		\begin{multline*}
			\frac{1}{2}E_0(t) \leq K_0 \qty(1 + \nu^{s^*-1/2} + \qty(\frac{\beta^3\abs{k}}{\nu})^{1/2} )\exp\left(\frac{1}{4} \beta^{\frac 12}\nu^{\frac 12-s^{\ast}}2^{\frac 12 \ell-1} \right)\\
			\times \exp\left(-\frac{1}{4}(\beta\nu\abs{k})^{\frac 12} \int_0^t \xi(\tau)^{\frac 12}w_{\nu}(\tau)\, d\tau\right)  E_0 (0).
		\end{multline*}
		The first exponential factor is simply a constant. Recalling that $\nu \ll 1$, the factor $\nu^{1/2 - s^*}$ in its exponent poses no issue since $s^* \leq \frac{1}{2}$. Similarly, the second prefactor term $\nu^{s^* - 1/2}$ is at most (when $s^* = 0$) of order $\mathcal{O}(\nu^{-1/2})$ and thus can be absorbed into the third prefactor term. Lastly, we recall that $\beta \leq 1$. Up to adjusting the constant $K_0$ we obtain
		\begin{align}
			E_0(t) \leq C_{\rm{ed}} \qty(1 + \qty(\frac{\abs{k}}{\nu})^{1/2})   \exp\left(-\frac{1}{4}(\beta\nu\abs{k})^{\frac 12} \int_0^t \xi(\tau)^{\frac 12}w_{\nu}(\tau)\, d\tau\right)  E_0 (0).
		\end{align}
		The final statement is then achieved by applying the transformation \eqref{transformation}.
	
\end{proof}

\subsection{Proof of Proposition \ref{P:func_decay}}
\label{Proof:general}
Before outlining the strategy used in the proof of Proposition \ref{P:func_decay}, we first recall a spectral-gap type inequality as written in \cite{Bedrossian2017} (see also \cite{cotizelatigallay}).

\begin{lemma}[Proposition 2.7 in \cite{Bedrossian2017}]
	\label{L:spectral_gap}
	Suppose that $v:\mathbb{T}\to\R$ satisfies the assumptions of Theorem \ref{T:hyp_weights_main}. Then there exists a constant $C_{sp} \geq 1$ such that for all $\sigma \in (0,1]$,
	\begin{equation}
		\sigma^{\tfrac{1}{2}} E_0 \lesssim C_{sp} [\sigma E_1 + E_4]\,.
	\end{equation}
\end{lemma}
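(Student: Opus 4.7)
The plan is to prove the inequality by a classical localization around the (finitely many) critical points $\{y_j\}_{j=1}^{N}$ of $v$, optimised over a single length scale $\delta$. The heuristic is that away from the critical points $|\partial_y v|$ is bounded below, so the $E_0$-mass there is controlled directly by $E_4$; near each critical point the loss in $|\partial_y v|$ is compensated by the small Lebesgue measure $2N\delta$ of the neighbourhood, and the pointwise size of $\theta$ on this region is controlled via a one-dimensional Sobolev embedding involving $E_0$ and $E_1$. Balancing the two contributions yields an interpolation of the form $E_0 \lesssim \delta^2 E_1 + \delta^{-2} E_4$, and the parameter $\sigma$ appearing in the statement emerges from the optimal choice $\delta = \sigma^{1/4}$.

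Concretely, I would set $A_\delta = \bigcup_{j=1}^{N}(y_j-\delta,y_j+\delta)$ and $B_\delta = \mathbb{T}\setminus A_\delta$, with $\delta \in (0,\delta_0]$ for a sufficiently small $\delta_0$. Because $v \in \mathcal{C}^2$ with $v''(y_j)\neq 0$, Taylor expansion around each $y_j$ gives $|\partial_y v(y)| \geq c\,|y-y_j|$ on a neighbourhood of $y_j$; combined with continuity and compactness of $\mathbb{T}$, this extends to $|\partial_y v(y)| \geq c\delta$ for all $y \in B_\delta$. Hence
\begin{equation*}
\|\theta\|_{L^2(B_\delta)}^{2} \leq (c\delta)^{-2}\int_{B_\delta}|\partial_y v|^{2}|\theta|^{2}\,dy \leq C\delta^{-2} E_4.
\end{equation*}
On $A_\delta$ I would apply the one-dimensional Gagliardo--Nirenberg inequality $\|\theta\|_{L^\infty(\mathbb{T})}^{2} \leq C(E_0 + E_0^{1/2} E_1^{1/2})$ together with $|A_\delta| \leq 2N\delta$ to obtain $\|\theta\|_{L^2(A_\delta)}^{2} \leq C\delta(E_0 + E_0^{1/2} E_1^{1/2})$. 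Summing the two estimates, absorbing the $C\delta E_0$ contribution into the left-hand side (for $\delta$ small), and applying Young's inequality $E_0^{1/2}E_1^{1/2} \leq \tfrac{1}{4\delta}E_0 + \delta E_1$ to the remaining cross term yields the interpolation $E_0 \lesssim \delta^{2} E_1 + \delta^{-2} E_4$.

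Setting $\delta = \sigma^{1/4}$ for $\sigma \in (0,1]$ and multiplying through by $\sigma^{1/2}$ gives the claimed bound $\sigma^{1/2} E_0 \lesssim \sigma E_1 + E_4$, with $C_{sp}$ traceable to the Sobolev constant, the number $N$ of critical points, and the lower bound $c$ on $|\partial_y v|/\dist(y,\{y_j\})$. The main obstacle is the passage from local Taylor expansions at each $y_j$ to a uniform lower bound $|\partial_y v| \geq c\delta$ on $B_\delta$: this requires patching the local estimates with a positive lower bound on $|\partial_y v|$ over the complement of fixed small neighbourhoods (by continuity on a compact set) and checking that the resulting constant is independent of $\sigma$. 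A further technical point is that the absorption step demands $\delta \leq \delta_0$ for some absolute $\delta_0$; this is compatible with the full range $\sigma \in (0,1]$ up to enlarging $C_{sp}$, since for $\sigma$ close to $1$ the inequality reduces to a weakened form of the trivial bound $E_0 \lesssim E_1 + E_4$.
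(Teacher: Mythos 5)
Your argument is correct in substance. The paper does not actually prove this lemma --- it is quoted from \cite{Bedrossian2017} (Proposition~2.7) with the proof deferred to \cite{cotizelatigallay} (Proposition~3.7) --- and the localization you propose (splitting $\mathbb{T}$ into the $\delta$-neighbourhoods $A_\delta$ of the critical points and their complement $B_\delta$, using $|\partial_y v|\gtrsim\delta$ on $B_\delta$, controlling the contribution of $A_\delta$ through its measure $\lesssim N\delta$ and the one-dimensional Agmon/Gagliardo--Nirenberg bound, and optimizing $\delta\sim\sigma^{1/4}$) is essentially the argument given in those references, so you have reconstructed the omitted proof rather than found a new one. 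One cosmetic point: after multiplying your Young inequality $E_0^{1/2}E_1^{1/2}\leq \tfrac{1}{4\delta}E_0+\delta E_1$ by the prefactor $C\delta$ coming from the Sobolev step, the resulting term $\tfrac{C}{4}E_0$ can be absorbed into the left-hand side only if $C<4$; use instead the weighted form $E_0^{1/2}E_1^{1/2}\leq \tfrac{\varepsilon}{\delta}E_0+\tfrac{\delta}{4\varepsilon}E_1$ with $\varepsilon=(4C)^{-1}$, which changes nothing else in the argument. Your treatment of the two genuine technical points --- the uniform lower bound $|\partial_y v|\geq c\delta$ on $B_\delta$, obtained by Taylor expansion at the simple critical points patched with a compactness lower bound away from them, and the extension from $\sigma\leq\delta_0^4$ to the full range $\sigma\in(0,1]$ at the price of enlarging $C_{sp}$ --- is also correct.
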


We will frequently use this inequality. For its proof, we refer the reader to \cite[Proposition 3.7]{cotizelatigallay}.

The proof of Proposition \ref{P:func_decay} relies on the following key lemma:

\begin{lemma}
	\label{L:functionaldecayprop}
	Under the same assumptions as in Proposition \ref{P:func_decay}, the functional $\Phi$ satisfies, for any finite time $T$,
	\begin{equation}\label{functional_final}
		\frac{d}{dt}\Phi + \frac{1}{4}(\beta\,\xi(t)\,\nu |k|)^{\tfrac{1}{2}} w_{\nu}(t)\,\Phi \leq 0, \quad \forall t \leq T.
	\end{equation}
\end{lemma}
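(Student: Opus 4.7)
My plan is to differentiate $\Phi$ term by term, using \eqref{e0}--\eqref{e4} for the energy derivatives and \eqref{weight_derivative} for the weight derivatives, and then to split $\frac{d}{dt}\Phi$ into a manifestly negative dissipative part and a sign-indefinite remainder. The dissipative part consists of
\begin{equation*}
	-\nu E_1,\quad -\tfrac{3}{2}\alpha_0\nu^s w_\nu^{4}E_1,\quad -\alpha_0 \nu w_\nu^{3} E_2,\quad -\beta_0 \xi w_\nu^{2} k^2 E_4,\quad -\tfrac{1}{2}\gamma_0\nu^s w_\nu^{2}k^2 E_4,\quad -\gamma_0\nu w_\nu k^2\|\partial_y v\,\partial_y\theta\|_{L^2_y}^{2},
\end{equation*}
while the sign-indefinite part consists of the $E_3$-contributions $-\alpha_0\xi w_\nu^{3}E_3$ (from \eqref{e1}) and $-2\beta_0\nu^s w_\nu^{3}E_3$ (from differentiating $w_\nu^{2}$), together with the two $\nu$-weighted cross terms in \eqref{e3} involving $\partial_y^{2}\theta$ and the one in \eqref{e4} involving $\partial_y v\,\partial_y^{2}v$.

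The first step is to absorb every sign-indefinite term into fractions of the dissipative terms listed above by Cauchy--Schwarz and Young's inequality. For the $E_3$-terms I would use $|E_3|\le |k|E_1^{1/2}E_4^{1/2}$ with Young parameters calibrated so that the $E_1$-piece fits inside $\nu E_1 + \alpha_0\nu^s w_\nu^{4}E_1$ and the $E_4$-piece fits inside $\beta_0\xi w_\nu^{2}k^2 E_4 + \gamma_0\nu^s w_\nu^{2}k^2 E_4$; the point is that the relation $\alpha_0^{2}=\beta_0\nu$ in \eqref{beta_def} is exactly what makes this balance possible, and the upper bound $\xi\le w_\nu^{-\ell}$ with $\ell\in[2,4]$ ensures that the prefactors $\alpha_0\xi w_\nu^{3}$ and $\nu^s w_\nu^{3}$ remain controlled as $w_\nu\to 0$. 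The cross terms in \eqref{e3}--\eqref{e4} involving $\|\partial_y v\|_{L^\infty}$ and $\|\partial_y^{2}v\|_{L^\infty}$ are absorbed into the $\alpha_0\nu w_\nu^{3}E_2$, $\gamma_0\nu w_\nu k^2\|\partial_y v\,\partial_y\theta\|_{L^2_y}^{2}$, and $E_4$ reserves in a completely analogous fashion.

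After this absorption step a strictly positive fraction of $\nu E_1$ and of $\beta_0\xi w_\nu^{2}k^2 E_4$ remains as dissipation. I then invoke Lemma \ref{L:spectral_gap} with the choice
\begin{equation*}
	\sigma=\min\left\{1,\,c\,\frac{\nu}{\beta\,\xi\,|k|\,w_\nu^{2}}\right\}
\end{equation*}
for a small universal constant $c>0$. A short computation shows that this converts the leftover $E_1$ and $E_4$ dissipation into a lower bound of the form $(\beta\xi\nu|k|)^{1/2}w_\nu E_0$, up to a fixed multiplicative constant; the remaining $\alpha_0 w_\nu^{3}E_1$ and $\gamma_0 w_\nu k^2 E_4$ components of $\Phi$, weighted by $(\beta\xi\nu|k|)^{1/2}w_\nu$, are directly dominated by $\nu E_1$ and $\beta_0\xi w_\nu^{2}k^2 E_4$ respectively, again using $\xi\le w_\nu^{-\ell}$ and $\beta\in[\nu/|k|,1]$. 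Combining these bounds with the coercivity \eqref{coercivity} yields \eqref{functional_final} with the prefactor $\tfrac{1}{4}$ after adjusting numerical constants.

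The main obstacle is the two-branch lower bound on $\xi$. Lemma \ref{L:spectral_gap} requires $\sigma\le 1$, which translates into $\xi\ge c'\nu/(\beta|k|w_\nu^{2})$. For small $t$ the weight $w_\nu$ is close to $1$ and this reduces to $\xi\ge c'\nu/(\beta|k|)$, which in the worst case $\beta=\nu/|k|$ becomes $\xi\gtrsim 1$; the assumption $\xi\ge C^{2}\beta^{2}$ is the correct substitute, and the threshold time $t^{\ast}_{\nu,k}$ is precisely where the two expressions $C^{2}\beta^{2}$ and $\nu w_\nu^{-2}/(\beta|k|)$ coincide. Ensuring that $C\ge 4$ (or whatever universal constant emerges from the Young bookkeeping) suffices for the absorption to close on both sides of $t^{\ast}_{\nu,k}$, and this consistency check---together with verifying that the exponent of $w_\nu$ produced in every cross-term estimate is nonnegative so that no factor blows up as $t\to T$---is where the real care is required.
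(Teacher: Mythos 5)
Your plan follows the paper's proof essentially step for step: differentiate $\Phi$ using \eqref{e0}--\eqref{e4} and \eqref{weight_derivative}, absorb the sign-indefinite $E_3$ and cross terms by Young's inequality (with $\alpha_0^2=\beta_0\nu$ and $\xi\le w_\nu^{-\ell}$ doing exactly the balancing you describe), apply Lemma \ref{L:spectral_gap} with $\sigma\sim\nu/(k^2\beta_0\xi w_\nu^2)$, and close via the coercivity \eqref{coercivity}, with the two-branch lower bound on $\xi$ arising precisely from the constraint $\sigma\le 1$ versus the constant needed to beat the spectral-gap prefactor. One correction: the terms $\nu k^2 w_\nu\,\|\partial_y^2 v\,\theta\|_{L^2_y}^2$ produced by the Young step cannot be absorbed into an ``$E_4$ reserve,'' since $E_4=\|\partial_y v\,\theta\|^2$ degenerates exactly at the critical points of $v$ where $\partial_y^2 v$ does not vanish; the paper instead bounds them by $c_\infty E_0$ and lets the spectral-gap step (which you already invoke) dominate the resulting $E_0$ contribution via the leftover $\nu E_1$ and $\beta_0\xi w_\nu^2 k^2 E_4$ dissipation. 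With that rerouting your argument closes as in the paper.
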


Applying Gronwall's inequality to \eqref{functional_final} immediately yields Proposition \ref{P:func_decay}.

\begin{proof}
	We divide the proof in three steps: in the first one we will use energy balances to derive an upper bound on $\Phi$, under specific growth conditions on $\xi(t)$. In the second step, we apply the spectral gap estimate of Lemma \ref{L:spectral_gap} in order to “eliminate” $E_0$ from the upper bound. In the third step, we reconstruct the functional.
	
	\textbf{Step 1: Functional estimate}
	
	We first compute the time derivative of the functional $\Phi$ and find
	\begin{align}
		\label{phi_dv_t}
		\dv{}{t}\Phi= \frac{1}{2}\dv{}{t}E_0 + \frac{1}{2}\alpha_0 \dv{}{t} \qty(w_{\nu}^{3} E_1)+ \beta_0 \dv{}{t}\qty(w_{\nu}^{2}E_3)+\frac{1}{2}\gamma_0 k^2 \dv{}{t}\qty(w_{\nu} E_4).
	\end{align}

	Thanks to the identities \eqref{e0}-\eqref{e4}, \eqref{phi_dv_t} can be rewritten as follows
	\begin{align*}
		&\dv{}{t}\Phi - \frac{1}{2}\alpha_0  E_1 \dv{w_{\nu}^{3}}{t} -\beta_0 E_3 \dv{w_{\nu}^{2}}{t} -\frac{1}{2}\gamma_0 k^2 E_4 \dv{w_{\nu}}{t} \\ &+\nu E_1 + \nu \alpha_0 w_{\nu}^{3} E_2+\beta_0 \xi(t)k^2w_{\nu}^{2}E_4 + \nu\gamma_0 k^2 w_{\nu}\norm{\partial_y v\partial_y\theta}_{L^2_y}^{2} \\
		=&-\alpha_0 w_{\nu}^{3} \xi(t)E_3-2\nu\beta_0w_{\nu}^{2}\Re \inner{ik\partial_y v\partial_y\theta}{\partial_y^2 \theta}-\nu\beta_0w_{\nu}^{2}\Re\inner{ik\partial_y^2v \theta}{\partial_y^2 \theta}\\
		& -2\nu\gamma_0 k^2 w_{\nu}\Re\inner{\partial_y v\partial_y^2 v \theta}{\partial_y\theta}.
	\end{align*}
	We first estimate the inner products on the right-hand side by Young's inequality and get
	\begin{align*}
		\lvert \alpha_0 \xi(t) w_{\nu}^{3}E_3 \rvert
		&\leq \frac{1}{2}\frac{\alpha_0^2}{\beta_0}\xi(t)w_{\nu}^{4}E_1+\frac{k^2}{2}\beta_0\xi(t) w_{\nu}^{2}E_4, \\
		\lvert 2\nu \beta_0 w_{\nu}^{2}\Re\inner{ik \partial_y v\partial_y \theta}{\partial_y^2\theta} \rvert
		&\leq \frac{\nu}{2}\alpha_0 w_{\nu}^{3}E_2+2\nu k^2 \frac{\beta_0^2}{\alpha_0} w_{\nu} \norm{\partial_y v \partial_y \theta}_{L^2_y}^{2} , \\
		\lvert \nu \beta_0 w_{\nu}^{2}\Re \inner{ik \partial_y^2v\theta}{\partial_y^2\theta} \rvert
		&\leq \frac{\nu}{2}\alpha_0 w_{\nu}^{3}E_2+\frac{\nu k^2}{2}\frac{\beta_0^2}{\alpha_0}w_{\nu}\norm{\partial_y^2v\theta}_{L^2_y}^{2} , \\
		\lvert 2\nu\gamma_0 k^2 w_{\nu}\Re\inner{\partial_y v\partial_y^2 v \theta}{\partial_y\theta} \rvert &\leq \frac{\nu k^2}{2}\gamma_0 w_{\nu} \lVert \partial_y v \partial_y \theta \rVert_{L^2_y}^2 +2 \gamma_0 \nu k^2w_{\nu}\norm{\partial_y^2v\theta}_{L^2_y}^{2} .
	\end{align*}
	Using these inequalities we can obtain the following estimate for the time derivative of the functional \eqref{phi_dv_t}
	\begin{align*}
		&\dv{}{t}\Phi  - \frac{1}{2}\alpha_0  E_1 \dv{w_{\nu}^{3}}{t} -\beta_0 E_3 \dv{w_{\nu}^{2}}{t} -\frac{1}{2}\gamma_0 k^2 E_4 \dv{w_{\nu}}{t} \\
		+&\qty(\nu-\frac{1}{2}\frac{\alpha_0^2}{\beta_0}\xi(t)w_{\nu}^{4})E_1+\frac{k^2}{2}\beta_0\xi(t) w_{\nu}^{2}E_4\\
		\leq&\nu k^2 \qty(2\frac{\beta_0^2}{\alpha_0}-\frac{\gamma_0}{2})w_{\nu}\norm{\partial_y v\partial_y\theta}_{L^2_y}^{2} +\nu k^2 w_{\nu}\qty(\frac{1}{2}\frac{\beta_0^2}{\alpha_0}+2\gamma_0)\norm{\partial_y^2v\theta}_{L^2_y}^{2} .
	\end{align*}
	Now by using condition \eqref{condition_param_2} this estimate can be simplified to 
	\begin{multline*}
		\dv{}{t}\Phi - \frac{1}{2}\alpha_0  E_1 \dv{w_{\nu}^{3}}{t} -\beta_0 E_3 \dv{w_{\nu}^{2}}{t} -\frac{1}{2}\gamma_0 k^2 E_4 \dv{w_{\nu}}{t} 
		+\qty(\nu-\frac{1}{2}\frac{\alpha_0^2}{\beta_0}\xi(t)w_{\nu}^{4})E_1\\+\frac{k^2}{2}\beta_0\xi(t) w_{\nu}^{2}E_4
		\leq\nu k^2c_\infty \gamma_0w_{\nu} E_0,
	\end{multline*}
	where we have used the constraint $ 3 \norm{\partial_y^2v}^2_{L^\infty}\leq c_\infty$.
	Next we wish to estimate the terms with time derivatives of the weights and therefore explicitly compute them by using relation \eqref{weight_derivative}
	\begin{multline*}
		\dv{}{t}\Phi + \alpha_0 \frac{3 \nu^s}{2}w_{\nu}^{4} E_1+\frac{\nu^s}{2}\gamma_0 k^2w_{\nu}^{2}E_4+
		\qty(\nu-\frac{1}{2}\frac{\alpha_0^2}{\beta_0}\xi(t)w_{\nu}^{4})E_1+\frac{k^2}{2}\beta_0\xi(t) w_{\nu}^{2}E_4\\
		\leq- 2\beta_0\nu^sw_{\nu}^{3} E_3+\nu k^2c_\infty \gamma_0w_{\nu} E_0.
	\end{multline*}
	Similarly to the estimates above, by Young's inequality we find 
	\begin{align*}
		\lvert 2\beta_0 \nu^sw_{\nu}^{3} E_3 \rvert &\leq \frac{\alpha_0 }{2}\nu^sw_{\nu}^{4}E_1+\frac{2\beta_0^2}{ \alpha_0}\nu^s k^2 w_{\nu}^{2}E_4\\
		&\leq\frac{\alpha_0 }{2}\nu^sw_{\nu}^{4}E_1+\frac{k^2}{8}  \gamma_0 \nu^s k^2w_{\nu}^{2}E_4,
	\end{align*}
	where we again used condition \eqref{condition_param_2} in the second line.
	
	Now we can absorb these terms into their respective counterparts on the left and subsequentially drop the residuals of these terms. This simplifies the estimate for $\dv{}{t}\Phi$ further to
	\begin{equation*}
		\dv{}{t}\Phi + \qty(\nu-\frac{1}{2}\frac{\alpha_0^2}{\beta_0}\xi(t)w_{\nu}^{4})E_1+\frac{k^2}{2}\beta_0\xi(t) w_{\nu}^{2}E_4
		\leq\nu k^2c_\infty \gamma_0w_{\nu} E_0.
	\end{equation*}
	\\ Then, the upper bound in \eqref{gamma_bounds} and the choice 
	\begin{equation}\label{choice-alpha0}
		\alpha_0=\nu^{\frac 12}\beta_0^{\frac 12}\,,
	\end{equation}
	imply
	\begin{equation*}
		\nu-\frac{1}{2}\frac{\alpha_0^2}{\beta_0}\xi(t)w_{\nu}^{4}\geq \frac{\nu}{4}\xi(t)w_{\nu}^\ell.
	\end{equation*}
	
	Thus, we obtain
	\begin{equation}\label{temp1}    
		\dv{}{t}\Phi+\frac{\nu}{4}\left(1+\xi(t)w_{\nu}^{\ell}\right)E_1+\frac{k^2}{2}\beta_0 \xi(t) w_{\nu}^{2}E_4\leq \nu k^2 \gamma_0 c_\infty w_{\nu} E_0.
	\end{equation}

	\textbf{Step 2: Spectral gap estimate}\\
	
	We start from \eqref{temp1} and rearrange it as follows:
	\begin{multline*}   
		\dv{}{t}\Phi+\frac{\nu}{4}\xi(t)w_{\nu}^{\ell}E_1+\frac{k^2}{4}\beta_0 \xi(t) w_{\nu}^{2}E_4+A\nu k^2 \gamma_0 c_\infty w_{\nu} E_0\\
		+\frac{\nu}{4}E_1+\frac{k^2}{4}\beta_0 \xi(t) w_{\nu}^{2}E_4
		\leq (A+1)\nu k^2 \gamma_0 c_\infty w_{\nu} E_0\,,
	\end{multline*}
	where $A>0$ is a balancing parameter, to be determined. Our goal is to control the bad term caused by diffusion on the right-hand side of the inequality with $E_1$ and $E_4$. Now we recall the spectral gap estimate from Lemma \ref{L:spectral_gap}, which we rewrite in the form
	\begin{equation}
		E_0\leq C_{sp}[\sigma^{\frac 12} E_1+\sigma^{-\frac 12}E_4],\quad \forall \sigma\in (0,1].
	\end{equation}
	If we choose 
	$$\sigma= \frac{\nu}{k^2\beta_0 \xi(t)w_{\nu}(t)^2}\leq 1 \qquad \mbox{ and } A=\frac{1}{4}\frac{\beta_0^{\frac 12}\xi(t)^{\frac 12}\nu^{-\frac 12}}{\abs{k}\gamma_0 c_{\infty}C_{\rm{sp}}}-1\,.$$
	then we absorb the right-hand side within parts of the left-hand side of the inequality involving $\Phi$:
	\begin{equation*}   
		\dv{}{t}\Phi+\frac{\nu}{4}\xi(t)w_{\nu}^{\ell}E_1+\frac{k^2}{4}\beta_0 \xi(t) w_{\nu}^{2}E_4+A\nu k^2 \gamma_0 c_\infty w_{\nu} E_0\leq 0.
	\end{equation*}
	On the one hand, we notice that the spectral-gap constraint $\sigma \leq 1$ translates in the time-dependent lower bound for $\xi(t)$
	\begin{equation}
		\label{gamma_bound-dynamic}
		\xi(t)\geq \frac{\nu}{k^2 \beta_0 w_{\nu}(t)^2}=\frac{\nu}{k^2 \beta_0 }(1+\nu^s t)^2
	\end{equation}
	
	On the other hand, we observe that if \begin{align}
		\label{gamma_bound-static}
		\xi(t)\geq 25\frac{k^2\gamma_0^2\nu c_{\infty}^2C_{\rm{sp}}^2}{\beta_0}
	\end{align}
	then, without affecting scaling behaviours, we can estimate the above inequality and arrive at
	\begin{equation*}  
		\dv{}{t}\Phi+\frac{\nu}{4}\xi(t)w_{\nu}^{\ell}E_1+\frac{k^2}{4}\beta_0 \xi(t) w_{\nu}^{2}E_4+ \frac{1}{4}(\beta_0\xi(t) \nu)^{\frac 12}\abs{k} w_{\nu}\ E_0\leq 0\,.
	\end{equation*}
	Towards the reconstruction of the functional, we rewrite this as
	\begin{multline*}   
		\dv{}{t}\Phi+ \frac{1}{4}(\beta_0\xi(t) \nu)^{\frac 12}\abs{k} w_{\nu}\qty[E_0+\frac{\nu^{1/2}}{\beta_0^{1/2}\abs{k}}\xi(t)^{1/2}w^{\ell-1}E_1+\frac{\abs{k}}{\nu^{1/2}}\beta_0^{1/2} \xi(t)^{1/2} w_{\nu} E_4] \\ \leq 0.
	\end{multline*}
	Now since $w_{\nu}^{\ell-1}\geq w_{\nu}^3$ for $\ \ell\leq 4$ we can already restore the proper weights of the functional:
	\begin{equation}  \label{functional_post-spectral}  
		\dv{}{t}\Phi+ \frac{1}{4}(\beta_0\xi(t)\nu)^{\frac 12} \abs{k}w_{\nu}\qty[E_0+\frac{\nu^{1/2}}{\beta_0^{1/2}\abs{k}}\xi(t)^{1/2}w_{\nu}^3 E_1+\frac{\abs{k}}{\nu^{1/2}}\beta_0^{1/2} \xi(t)^{1/2} w_{\nu} E_4] \leq 0.
	\end{equation}
	\textbf{Step 3: Rebuilding the functional}\\
	We are left with restoring the correct parameters of the functional and dealing with the residual powers of $\xi(t)$ in front of $E_1$ and $E_4$.
	
	Recalling the choice of $\gamma_0$ in \eqref{beta_def}, we must have
	\begin{equation}
		\gamma_0=16\frac{\beta_0^{3/2}}{\nu^{1/2}}
	\end{equation}
	and inserting this choice into \eqref{gamma_bound-static} we get 
	\begin{align*}
		\xi(t) \geq 6400 k^2\beta_0^2 c_\infty^2 C_{sp}^2.
	\end{align*}
	Further, the lower bound is uniform in $k$, as we recall from \eqref{beta_def} that 
	\begin{equation}\label{choice-beta0}
		\beta_0=\beta \abs{k}^{-1}, \quad \mbox{for some } \beta\in \qty[\frac{\nu}{\abs{k}},1]
	\end{equation}
	and hence set $80 c_\infty C_{sp}=: C $.
	Now we can rewrite \eqref{gamma_bound-dynamic} as \eqref{gamma_bound-static} as
	\begin{equation}
		\label{gamma_bounds-simple-d}
		\xi(t) \geq \frac{\nu}{\abs{k}\beta}(1+\nu^st)^{2}, 
	\end{equation}
	and 
	\eqref{gamma_bound-static} as
	\begin{equation}
		\label{gamma_bounds-simple-s}
		\xi(t) \geq C^2\beta^2.
	\end{equation}
	
	Depending on the dominating lower bounds, we identify two regimes: 
	$$ t\leq t^{\ast}_{\nu,k}\quad \mbox{ and }\quad  t\geq t^{\ast}_{\nu,k} \qquad \mbox{ where } \quad t^{\ast}_{\nu,k}=\frac{1}{\nu^s}\left(\frac{|k|^{\frac 12}\nu^{\frac 12}C\beta^{\frac 32}}{\nu^{\frac 12}}-1\right)$$
	
	\textbf{Short-time estimate}\\
	Assume $t < t_{\nu,k}^*$. Since this is the time interval where  the lower bound in \eqref{gamma_bounds-simple-s} dominates over the lower bound in \eqref{gamma_bounds-simple-d}, we find
	\begin{equation*}   
		\dv{}{t}\Phi+ \frac{1}{4}(\beta\xi(t)\nu\abs{k})^{\frac 12} w_{\nu}\qty[E_0+\frac{\nu^{1/2} \beta^{1/2}}{\abs{k}^{1/2}}Cw_{\nu}^3 E_1+\frac{\abs{k}^{1/2}}{\nu^{1/2}}\beta^{3/2}C w_{\nu} E_4] \leq 0.
	\end{equation*}
	Recalling our parameter choices \eqref{beta_def} this leads to
	\begin{equation*}   
		\dv{}{t}\Phi+ \frac{1}{4}(\beta\xi(t)\nu\abs{k})^{\frac 12} w_{\nu}\qty[E_0+C\alpha_0 w_{\nu}^3 E_1+\frac{1}{16}k^{2}C\gamma_0 w_{\nu} E_4] \leq 0.
	\end{equation*}
	Notice that, by a suitable parameter choice, we can always make sure that
	$\frac{1}{16}C=5c_\infty C_{sp}\geq 5$ holds. 
	Hence, we get
	\begin{equation} 
		\label{short_time-final_1}
		\dv{}{t}\Phi+ \frac{1}{4}(\beta\xi(t)\nu\abs{k})^{\frac 12} w_{\nu}\qty[E_0+5\alpha_0 w_{\nu}^3 E_1+5 k^2 \gamma_0 w_{\nu} E_4] \leq 0.
	\end{equation}
	\\
	\textbf{Large-time estimate}\\
	Assume $t \geq t^*$. In this case,
	$\xi(t)\geq \frac{\nu} {|k|\beta}w_{\nu}(t)^{-2}$ and we can insert this lower bound in \eqref{functional_post-spectral} to obtain
	\begin{align*}   
		\dv{}{t}\Phi+ \frac{1}{4}(\beta\xi(t)\nu\abs{k})^{\frac 12} w_{\nu}\bigg[E_0&+\frac{\nu^{1/2}}{\beta^{1/2}\abs{k}^{1/2}}\qty(\frac{\nu}{\abs{k}\beta})^{1/2}w_{\nu}(t)^{-1}w_{\nu}^3 E_1\\
		&+\frac{\abs{k}^{1/2}}{\nu^{1/2}}\beta^{1/2} \qty(\frac{\nu}{\abs{k}\beta})^{1/2}w_{\nu}(t)^{-1} w_{\nu} E_4\bigg] \leq 0.
	\end{align*}
	We now recall that, in this regime, $\frac{\nu} {|k|\beta}w_{\nu}(t)^{-2}\geq C^2\beta^2$, implying
	\begin{align*}
		w_{\nu}^{-1}\geq \qty(\frac{\abs{k}\beta}{\nu})^{1/2}C
	\end{align*}
	such that 
	\begin{equation*}   
		\dv{}{t}\Phi+ \frac{1}{4}(\beta\xi(t)\nu\abs{k})^{\frac 12} w_{\nu}\qty[E_0+\frac{\nu^{1/2}\beta^{1/2}}{\abs{k}^{1/2}}Cw_{\nu}^3 E_1+\frac{\abs{k}^{1/2}}{\nu^{1/2}}\beta^{3/2} Cw_{\nu} E_4] \leq 0.
	\end{equation*}
	Inserting the parameter choices \eqref{beta_def} we obtain
	\begin{equation*}   
		\dv{}{t}\Phi+ \frac{1}{4}(\beta\xi(t)\nu\abs{k})^{\frac 12} w_{\nu}\qty[E_0+C\alpha_0 w_{\nu}^3 E_1+k^2 C\gamma_0 w_{\nu} E_4] \leq 0.
	\end{equation*}
	Therefore, arguing in the same manner as above,
	\begin{equation} 
		\label{short_time-final}
		\dv{}{t}\Phi+ \frac{1}{4}(\beta\xi(t)\nu\abs{k})^{\frac 12} w_{\nu}\qty[E_0+5\alpha_0 w_{\nu}^3 E_1+5 k^2 \gamma_0 w_{\nu} E_4] \leq 0.
	\end{equation}
	Lastly, by applying the coercivity of the energy functional given in \eqref{coercivity} to both estimates we arrive at the statement of the Lemma. 
\end{proof}
Proposition \ref{P:func_decay} is then directly implied by Lemma \ref{L:functionaldecayprop}.

\section{Enhanced dissipation for On/Off-velocity fields}\label{S:inter}

In this section, we treat velocity fields that are active at certain times and might be turned off at others. As remarked in the introduction, this class of stirring fields is especially important in applications. \\
We proceed under the same assumptions outlined in Section \ref{Section-product}, specified in \eqref{setup_1}-\eqref{setup2}, and continue our analysis of the advection-diffusion equation presented in \eqref{Heu:adv-eq}.

The starting point is once again an augmented energy functional, albeit with a different modification. It takes the form of
\begin{equation}
	\Psi(t) \coloneqq \frac 12\left[E_0(t)+\alpha_0(t) E_1(t)+2\beta_0(t) E_3(t)+\gamma_0(t)  k^2E_4(t)\right],
\end{equation}
where $E_0,E_1, E_3, E_4$ are defined in \eqref{Es}.
The choice of this specific form of the functional $\Psi$ will become clear in the arguments that follow.
For the moment, we only notice two differences
with respect to the energy functional $\Phi$ defined in \eqref{aug-functional}: the absence of time-dependent weights and the time-dependency of the balancing parameters $\alpha_0,\beta_0$ and $\gamma_0$.
The second will be necessary in order to balance the scaling of the various error terms arising in those cases when $\xi(t)$ becomes very small, i.e. scaling with powers of $\nu$. This is crucial as it allows us to avoid lower bounds on $\xi(t)$ and therefore let it go to zero.

We assume that for all times $t\geq 0$ the non-negative parameters $\alpha_0,\beta_0, \gamma_0$ are bounded, and we impose the equivalent of condition \eqref{condition_param_2} for time-dependent parameters:
\begin{align}
	\label{O:condition_param_2}
	\beta_0(t)^2\leq \frac{1}{16}\gamma_0(t)\alpha_0(t),
\end{align} 
Then in analogy with the analysis in Section \ref{Section-product} we define 
	\begin{align}
		\label{O:beta_def}
		\alpha_0(t)=\nu^{1/2} \beta_0(t)^{1/2}, \quad \beta_0(t)=\frac{\beta(t)}{\abs{k}}, \quad\gamma_0(t)=16\frac{\beta_0(t)^{3/2}}{\nu^{1/2}},
	\end{align}
	where $\beta(t) \leq 1$ is now a function of time and again to be fixed later.\\
\\
Note that the coercivity of the augmented energy functional $\Psi$  follows from the point-wise in time bound \eqref{O:condition_param_2}.  By the same strategy employed to derive \eqref{coercivity}, we find
\begin{multline}
	\label{O:coercivity}
	\frac{1}{8}\qty(4E_0+3\alpha_0(t) E_1(t)+3\gamma_0(t) k^2 E_4(t))\leq \Psi(t)\\
	 \leq\frac{1}{8}\qty(4E_0(t)+5\alpha_0(t) E_1(t)+5\gamma_0(t) k^2 E_4(t)).
\end{multline}\\
In order to prove Theorem \ref{T:on_off} we first need to derive a quantitative bound for the functional $\Psi(t)$. This is contained in the next proposition.

\begin{proposition}\label{O:P:func_decay}
	Assume that $\xi(t)$ satisfies the conditions
	\begin{align}
		\xi(t) \leq 1, \quad \xi'(t) \leq{C_{\xi}}(\nu\abs{k})^{1/2},
	\end{align}
	for some small constant $C_{\xi}$.
	Then there exists a constant $C_\xi'$ that may only depend on $\norm{\partial_y^2v(y)}_{L^\infty}$ such that the energy functional $\Psi$ satisfies the following estimate
	\begin{align}
		\label{O:prop_1_statement}
		\Psi(t)\leq \exp \qty(-C_\xi'(\nu \abs{k})^{1/2}\int_0^t \xi(\tau)^3 d\tau) \Psi(0)
	\end{align}
	for any $t\leq T$ where the time $T$ is arbitrary but finite. 
\end{proposition}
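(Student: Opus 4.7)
The plan is to follow the same template as the proof of Lemma \ref{L:functionaldecayprop}, with two modifications tailored to the time-dependence of the parameters $\alpha_0(t),\beta_0(t),\gamma_0(t)$ and to the possibility that $\xi(t)\to 0$. The starting point is to differentiate $\Psi$ in time and substitute the energy balances \eqref{e0}--\eqref{e4}. This produces the same cross terms as in Step 1 of Lemma \ref{L:functionaldecayprop} (to be absorbed by Young's inequality), plus the new contributions
\begin{equation*}
\tfrac{1}{2}\alpha_0'(t)E_1(t) \;+\; \beta_0'(t)E_3(t) \;+\; \tfrac{1}{2}k^2\gamma_0'(t)E_4(t),
\end{equation*}
coming from the time derivatives of the parameters. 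These replace the weight-derivative terms that appeared in the earlier proof and must be absorbed using the smallness of $|\xi'(t)|$.

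By the scaling relations \eqref{O:beta_def}, all three parameters are determined by a single function $\beta(t)$. The next step is to make an explicit choice $\beta(t)=\beta(\xi(t))$ which simultaneously (i) guarantees a lower bound of the form $\beta(t)\gtrsim \nu/|k|$ so that the spectral-gap threshold $\sigma(t)\leq 1$ can be met; (ii) keeps the derivatives $\alpha_0',\beta_0',\gamma_0'$ small enough—through the assumption $|\xi'|\leq C_\xi(\nu|k|)^{1/2}$ and the smallness of $C_\xi$—to be absorbed into the negative $E_1$, $k^2E_4$, and $E_3$ terms on the left-hand side (the $E_3$ contribution being controlled via Cauchy--Schwarz by $E_1$ and $k^2E_4$ at the scale dictated by \eqref{O:condition_param_2}); and (iii) arranges that the effective rate $(\beta(t)\xi(t)\nu|k|)^{1/2}$ emerging from the calculation collapses to $C_\xi'(\nu|k|)^{1/2}\xi(t)^3$. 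A natural candidate is $\beta(t)$ behaving like $\max(\xi(t)^5,\nu/|k|)$, which yields $(\beta\xi)^{1/2}\sim \xi^3$ exactly in the non-degenerate regime $\xi(t)\gtrsim(\nu/|k|)^{1/5}$, while in the degenerate regime the target rate $\xi(t)^3$ is itself so small that \eqref{O:prop_1_statement} becomes automatic there.

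With $\beta(t)$ in hand, the remainder mirrors Section \ref{Section-product}: Young's inequality together with \eqref{O:condition_param_2} discards the cross terms from the transport contribution, Lemma \ref{L:spectral_gap} is applied with $\sigma(t)=\nu/\bigl(k^2\beta_0(t)\xi(t)\bigr)$ to convert the residual $\nu k^2\gamma_0 c_\infty E_0$ term into a combination proportional to $E_1$ and $E_4$, and finally the coercivity bound \eqref{O:coercivity} rebuilds $\Psi$, yielding a differential inequality of the shape
\begin{equation*}
\frac{d}{dt}\Psi(t) \;+\; C_\xi'(\nu|k|)^{1/2}\xi(t)^3\,\Psi(t)\;\leq\; 0,
\end{equation*}
at which point Grönwall's inequality closes the argument. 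The main obstacle will be the bookkeeping of the parameter-derivative terms: one has to verify that the smallness of $C_\xi$ genuinely suffices to absorb every contribution, and in particular that the sign-indefinite term $\beta_0'(t)E_3(t)$ can be neutralised at the right scale. This is where the exponent of $\xi$ in the choice of $\beta(t)$ is delicate—too high an exponent forfeits the spectral-gap threshold $\beta\gtrsim\nu/|k|$, while too low an exponent fails to reproduce the rate $\xi^3$ claimed in \eqref{O:prop_1_statement}.
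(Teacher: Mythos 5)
Your overall template is right (differentiate $\Psi$, absorb the new terms $\tfrac12\alpha_0'E_1+\beta_0'E_3+\tfrac12 k^2\gamma_0'E_4$ using the smallness of $\xi'$, apply the spectral gap, rebuild $\Psi$, Grönwall), but the specific mechanism you propose for producing the rate $\xi^3$ has a genuine gap. You import from Theorem \ref{T:hyp_weights_main} the heuristic that the decay rate is $(\beta\xi\,\nu\abs{k})^{1/2}$ and therefore choose $\beta(t)\sim\max(\xi^5,\nu/\abs{k})$ together with $\sigma(t)=\nu/(k^2\beta_0(t)\xi(t))$. That heuristic is only valid because in Section \ref{Section-product} the residual term $A\nu k^2\gamma_0 c_\infty E_0$ is converted into $(\beta_0\xi\nu)^{1/2}\abs{k}E_0$ via the \emph{additional} lower bound \eqref{gamma_bound-static} on $\xi$ — exactly the kind of lower bound Proposition \ref{O:P:func_decay} is designed to avoid. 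Moreover, with your $\sigma(t)$ the spectral-gap constraint $\sigma\le 1$ forces $\beta\xi\ge\nu/\abs{k}$, i.e. $\xi\ge(\nu/\abs{k})^{1/6}$ in your non-degenerate regime (not $(\nu/\abs{k})^{1/5}$), and in the remaining regime your argument reduces to the assertion that \eqref{O:prop_1_statement} is ``automatic'' because $\xi^3$ is small. It is not: you still must prove $\dv{}{t}\Psi+C_\xi'(\nu\abs{k})^{1/2}\xi^3\Psi\le 0$ there, and without an applicable spectral-gap estimate the sign-indefinite contribution $\nu k^2 c_\infty\gamma_0 E_0$ on the right-hand side is uncontrolled; $\Psi$ is not manifestly non-increasing.

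The paper's proof avoids all of this by a different parameter design: it sets $\beta(t)=C_\xi\xi(t)^2$ (so $\alpha_0\sim\nu^{1/2}\xi/\abs{k}^{1/2}$, $\gamma_0\sim\xi^3/(\nu^{1/2}\abs{k}^{3/2})$) and, crucially, applies Lemma \ref{L:spectral_gap} with the \emph{time-independent} choice $\sigma=\nu/(\abs{k}C_\xi)\le 1$ and a constant balancing parameter $A$. Because $\sigma$ and $A$ do not depend on $\xi(t)$, no lower bound on $\xi$ is ever imposed, and the rate $C_\xi'(\nu\abs{k})^{1/2}\xi^3$ comes directly from the retained term $A\nu k^2 c_\infty\gamma_0 E_0\propto(\nu\abs{k})^{1/2}\beta^{3/2}E_0=(\nu\abs{k})^{1/2}C_\xi^{3/2}\xi^3E_0$ — i.e. from $\gamma_0\propto\beta^{3/2}\propto\xi^3$, not from $(\beta\xi)^{1/2}$. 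The absorption of the good $E_1$ and $E_4$ terms on the left uses the splitting $\tfrac{\nu}{4}E_1\ge\tfrac{\nu}{8}E_1+\tfrac{\nu}{8}\xi^3E_1$ (valid since $\xi\le1$), and the parameter-derivative terms are absorbed exactly as you anticipate, using $\xi'\le C_\xi(\nu\abs{k})^{1/2}$ with $C_\xi$ small. If you replace your choice of $\beta(t)$ and $\sigma(t)$ by these, the rest of your outline goes through.
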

We now assume Proposition \ref{O:P:func_decay} and prove Theorem \ref{T:on_off}.
\begin{proof}[Proof of Theorem \ref{T:on_off}]
	We introduce the timescale
	\begin{align}
		T_{\nu,k}=\frac{1}{\nu^{1/2} \abs{k}^{1/2}},
	\end{align}
	Upon integrating the energy estimate \eqref{e0}, the mean-value theorem ensures the existence of a time $t_0 \in (0,  T_{\nu,k})$, such that
	\begin{align}
		2E_1(t_0) = \frac{E_0 (0) - E_0 (T_{\nu,k})}{\nu T_{\nu,k}} \leq \nu^{-1/2}\abs{k}^{1/2} E_0(0) .
	\end{align}
	This leads to the following bound
	\begin{align}
		\alpha_0(t_0) E_1(t_0)\leq \frac{\beta(t_0)^{1/2}}{2}  E_0(0),
	\end{align}
	where we have used the definitions \eqref{O:beta_def}.\\
	By \eqref{O:coercivity} we then have the estimate
	\begin{align*}
		\Psi(t_0) &\leq \frac{1}{8}[4 E_0(t_0)+5\alpha_0(t_0) E_1(t_0)+5\gamma_0(t_0) k^2 E_4(t_0)]\\
		&\leq K_0 (1 +\beta(t_0)^{1/2}+\gamma_0(t_0) k^2)E_0(0),
	\end{align*}
	where we used again \eqref{e0} to bound $E_4(t_0)\leq \|\partial_y v\|_{L^{\infty}}^2 E_0(t_0)< \|\partial_y v\|_{L^{\infty}}^2E_0(0)$. We emphasize that the constant $K_0> 0$ is independent of $\nu$ and $k$, but it does depend on $\lVert \partial_y v \rVert_{L^\infty}^2$. Then, for times $t\in (t_0,T]$, we apply Proposition \ref{P:func_decay} to find
	\begin{align}
		\frac{1}{2}E_0(t) 
		&\leq \Psi(t) \leq
		\exp\left(-C_\xi'(\nu\abs{k})^{\frac 12} \int_{t_0}^t \xi(\tau)^3\, d\tau\right) \Psi(t_0)
	\end{align}
	and by the above estimate for $\Psi(t_0)$ we then have
	\begin{multline*}
		\frac{1}{2}E_0(t) \leq K_0 (1 + \beta(t_0)^{1/2}+ \gamma_0(t_0) k^2)\exp\left(C_\xi'(\nu\abs{k})^{\frac 12} \int_0^{t_0} \xi(\tau)^3\, d\tau\right) \\ \qquad \qquad\qquad\times \exp\left(-C_\xi'(\nu\abs{k})^{\frac 12} \int_0^t \xi(\tau)^3\, d\tau\right)  E_0 (0)\,,
	\end{multline*}
	where we wrote $\int_{t_0}^{t}=\int_0^{t}-\int_{0}^{t_0}$. \\
	Using that $t_0\leq T_{\nu, k}=(\nu|k|)^{-\frac 12}$ we get
	\begin{multline}
		\label{O:L2_decay}
		\frac{1}{2}E_0(t) \leq K_0 (1 + \beta(t_0)^{1/2}+ \gamma_0(t_0) k^2)  \exp\left(C_\xi'\right)\\\times \exp\left(-C_\xi'(\nu\abs{k})^{\frac 12} \int_0^t \xi(\tau)^3\, d\tau\right)  E_0 (0).
	\end{multline}
	Now recalling our choice of $\gamma_0$ in \eqref{O:beta_def} we obtain
		\begin{multline*}
			\frac{1}{2}E_0(t) \leq K_0 \qty(1 + \beta(t_0)^{1/2} + \qty(\frac{\beta(t_0)^3\abs{k}}{\nu})^{1/2} )\exp\left(C_\xi'\right)\\
			\times \exp\left(-C_\xi'(\nu\abs{k})^{\frac 12} \int_0^t \xi(\tau)^3\, d\tau\right)  E_0 (0).
		\end{multline*}
		The first exponential factor is simply a constant and we recall that $\beta(t) \leq 1$. Up to adjusting the constant $K_0$ we obtain
		\begin{align}
			E_0(t) \leq C_{\rm{ed}} \qty(1 + \qty(\frac{\abs{k}}{\nu})^{1/2})   \exp\left(-C_\xi'(\nu\abs{k})^{\frac 12} \int_0^t \xi(\tau)^3\, d\tau\right)  E_0 (0).
		\end{align}
		The final statement is then achieved by applying the transformation \eqref{transformation}.
	
\end{proof}
\subsection{Proof of Proposition \ref{O:P:func_decay}}

The proof of Proposition \ref{O:P:func_decay} is a direct consequence of the following 
\begin{lemma} \label{O:functionaldecayprop}
	Under the same assumptions of Proposition \eqref{O:P:func_decay}, the functional $\Psi$ satisfies the following estimate
	\begin{align}
		\label{O:functional_final}
		\dv{}{t}\Psi+C_\xi'(\nu \abs{k})^{1/2}\xi(t)^3 
		\Psi &\leq 0,\quad \forall t \leq T,
	\end{align} 
	where time $T$ is arbitrary but finite.
\end{lemma}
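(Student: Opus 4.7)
The proof follows the three-step template of Lemma \ref{L:functionaldecayprop}—functional estimate, spectral-gap absorption, reconstruction—with two new features: the weights $w_\nu$ are absent and the balancing parameters $\alpha_0(t),\beta_0(t),\gamma_0(t)$ now carry time dependence through $\beta(t)$. I will first differentiate $\Psi$ along \eqref{e0}--\eqref{e4} and estimate the cross terms by Young's inequality together with the pointwise constraint \eqref{O:condition_param_2}, exactly as in Step 1 of the proof of Lemma \ref{L:functionaldecayprop}. The new contribution beyond the autonomous case is the boundary term $\tfrac{1}{2}\dot\alpha_0 E_1 + \dot\beta_0 E_3 + \tfrac{1}{2}\dot\gamma_0 k^2 E_4$ produced by the time differentiation of the parameters. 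With $\alpha_0 = \nu^{1/2}\beta_0^{1/2}$ and the hypothesis $\xi\leq 1$, the coefficient of $E_1$ remains bounded below by $\tfrac{\nu}{2}$, yielding at leading order
\begin{equation*}
\dv{}{t}\Psi+\tfrac{\nu}{2}E_1+\tfrac{k^2}{2}\beta_0\xi E_4 \;\leq\; \nu k^2 c_\infty \gamma_0 E_0 + \tfrac{1}{2}\dot\alpha_0 E_1 + \dot\beta_0 E_3 + \tfrac{1}{2}\dot\gamma_0 k^2 E_4.
\end{equation*}

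Next, I fix the ansatz $\beta(t)=c_0\xi(t)^5$ for a small constant $c_0$. The exponent $p=5$ is uniquely forced by the target cubic rate, since the natural hypocoercive scale $(\beta\xi\nu|k|)^{1/2}$ produced by the transport structure equals $c_0^{1/2}\xi^3(\nu|k|)^{1/2}$ precisely when $\beta\sim\xi^5$. Differentiating the ansatz, each of $\dot\alpha_0,\dot\beta_0,\dot\gamma_0$ is $\xi'$ times a nonnegative power of $\xi$ (since $p=5\geq 2$), so the hypothesis $\xi'(t)\leq C_\xi(\nu|k|)^{1/2}$ with $C_\xi$ small lets these terms be absorbed into the good $\tfrac{\nu}{2}E_1$ and $\tfrac{k^2}{2}\beta_0\xi E_4$ terms, after an additional Young's inequality splits the $\dot\beta_0 E_3$ contribution.

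The final step applies the spectral gap estimate of Lemma \ref{L:spectral_gap} to absorb the bad $\nu k^2 c_\infty\gamma_0 E_0$ term and extract the target rate $C_\xi'(\nu|k|)^{1/2}\xi^3 E_0$. With $\sigma = \nu/(k^2\beta_0\xi)$, Step 2 of the proof of Lemma \ref{L:functionaldecayprop} carries over essentially verbatim in the \emph{large-$\xi$} regime $\xi \geq (\nu/(c_0|k|))^{1/6}$ where the admissibility $\sigma \leq 1$ holds. In the complementary \emph{small-$\xi$} regime, both the bad term $\nu k^2 c_\infty \gamma_0\sim\xi^{15/2}$ and the target rate contribution $\sim\xi^3$ are small, and one uses the spectral gap at a fixed $\sigma\in(0,1]$ combined with $E_4 \leq \|\partial_y v\|_\infty^2 E_0$ to obtain a Poincaré-type inequality $E_0 \lesssim E_1$; the resulting $\nu E_1$ control then absorbs both the bad and the rate $E_0$-contributions. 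Coercivity \eqref{O:coercivity} reassembles $\Psi$ on the right-hand side, completing \eqref{O:functional_final}; the constraint $C_\xi'\geq \nu/|k|$ in Theorem \ref{T:on_off} is a bookkeeping consequence of this two-regime construction.

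The main obstacle I anticipate is the Step 3 bookkeeping needed to bypass any pointwise lower bound on $\xi$, which is essential in order to accommodate off-phases of the velocity field. The power $p=5$ is simultaneously forced by (i) the target cubic rate via $(\beta\xi)^{1/2}=\xi^3$, (ii) the absorbability of the differentiated-parameter terms (requiring $p\geq 4$ for the dominant $\dot\gamma_0 k^2 E_4$ contribution), and (iii) consistency across the two-regime spectral-gap analysis; the small constants must then be chosen in a definite order—$c_0$ first, then $C_\xi\ll c_0^{1/2}$, and finally $C_\xi'\sim c_0^{1/2}/C_{sp}$—so that the transition at $\xi\sim (\nu/|k|)^{1/6}$ closes consistently.
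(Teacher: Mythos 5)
Your Step 1 is consistent with the paper's, but the core of your proposal --- the ansatz $\beta(t)=c_0\xi(t)^5$ together with a two-regime spectral-gap argument --- does not close, and the failure occurs exactly at the point the lemma is designed to handle, namely small $\xi$. First, the claim that $p=5$ is ``uniquely forced'' rests on a misidentification of where the cubic rate comes from: in the on/off setting the paper does \emph{not} extract the rate from the transport scale $(\beta\xi\nu|k|)^{1/2}$ (that identification, as in Lemma \ref{L:functionaldecayprop}, requires the lower bound \eqref{gamma_bound-static} on $\xi$, which is precisely what must be avoided here). Instead the rate is read off directly from the coefficient of the $E_0$-term produced by the spectral-gap step, $A\nu k^2 c_\infty\gamma_0\sim(\nu|k|)^{1/2}\beta^{3/2}$, which is cubic in $\xi$ when $\beta(t)=C_\xi\xi(t)^2$ --- the paper's actual choice \eqref{O:fix_beta_t}. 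With $\beta\sim\xi^2$ all three quantities --- the split-off $\tfrac{\nu}{8}\xi^3E_1$, the good term $\tfrac{k^2}{8}\beta_0\xi E_4\sim|k|\xi^3E_4$, and the $E_0$-term $\sim(\nu|k|)^{1/2}\xi^3E_0$ --- are homogeneous of degree $3$ in $\xi$, so the spectral gap can be applied with the $\xi$-\emph{independent} choice $\sigma=\nu/(|k|C_\xi)\le1$ and no lower bound on $\xi$ ever enters. This homogeneity is the whole point of the construction, and the paper flags it explicitly.

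With your $\beta\sim\xi^5$ the good $E_4$-dissipation scales like $|k|\xi^6E_4$ while the target $E_0$-rate scales like $(\nu|k|)^{1/2}\xi^3E_0$, and in the regime $\xi^6\ll\nu/(c_0|k|)$ no admissible $\sigma$ exists: absorbing the $E_1$-part of the spectral gap forces $\sigma^{1/2}\lesssim\nu^{1/2}|k|^{-1/2}\xi^{-3}$ while absorbing the $E_4$-part forces $\sigma^{1/2}\gtrsim\nu^{1/2}|k|^{-1/2}\xi^{-3}$ with a \emph{larger} constant (and in any case $\sigma\le1$ fails there), so the window is empty. Your proposed rescue --- a ``Poincar\'e-type inequality $E_0\lesssim E_1$'' obtained from the spectral gap at fixed $\sigma$ together with $E_4\le\norm{\partial_yv}_{L^\infty}^2E_0$ --- is not valid: Lemma \ref{L:spectral_gap} reads $\sigma^{1/2}E_0\lesssim\sigma E_1+E_4$, and substituting $E_4\le CE_0$ on the right leaves an order-one multiple of $E_0$ that cannot be absorbed into the left-hand side; nor does a genuine Poincar\'e inequality hold, since $\theta(k,\cdot,t)$ carries no mean-zero condition in $y$ (the mean-free assumption \eqref{avg_free} is in $x$ only; the horizontal diffusion $e^{-\nu k^2t}$ has already been factored out by \eqref{transformation}). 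Consequently the differential inequality \eqref{O:functional_final} is not established on the small-$\xi$ set, which may have full measure for the on/off flows the theorem targets. The fix is to replace your ansatz by $\beta(t)=C_\xi\xi(t)^2$ and follow the paper's single-regime argument; your treatment of the parameter-derivative error terms then goes through essentially as you describe.
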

This Lemma, when combined with an application of Grönwall's inequality, implies Proposition \ref{O:P:func_decay}. 
\begin{proof}
	In structural analogy to the proof of Lemma \ref{functional_final}, we divide the proof in three steps: in the first one we will use energy
	balances to derive an upper bound on $\Psi$, where we now need to control additional error terms arising from the time-dependency of $\beta$. In the second step, we apply the spectral gap estimate of Lemma \ref{L:spectral_gap} in
	order to ”eliminate” $E_0$ from the upper bound. In the third step, we reconstruct the
	functional.\\
	\textbf{Step 1: Functional estimate}
	
	We first compute the time derivative of the functional $\Psi$ and find
	\begin{multline}
		\label{O:phi_dv_t}
		\dv{}{t}\Psi(t)= \frac{1}{2}\dv{}{t}E_0(t) + \frac{1}{2}\dv{}{t} \qty(\alpha_0(t)  E_1(t))\\
		+ \dv{}{t}\qty( \beta_0(t)E_3(t))+\frac{1}{2} k^2 \dv{}{t}\qty(\gamma_0(t)E_4(t)).
	\end{multline}
	Using the above energy identities (see Lemma \ref{L:energy}) this can be rewritten as follows
	\begin{align*}
		&\dv{}{t}\Psi 
		- \frac{1}{2} \dv{\alpha_0 }{t} E_1- \dv{\beta_0}{t}E_3-\frac{1}{2} k^2 \dv{\gamma_0}{t}E_4\\
		&+\nu E_1 + \nu \alpha_0  E_2+\beta_0 \xi(t)k^2E_4 + \nu\gamma_0 k^2 \norm{\partial_y v\partial_y\theta}_{L^2_y}^{2} \\
		=&-\alpha_0  \xi(t)E_3-2\nu\beta_0\Re \inner{ik\partial_y v\partial_y\theta}{\partial_y^2 \theta}-\nu\beta_0\Re\inner{ik\partial_y^2v \theta}{\partial_y^2 \theta}\\
		& -2\nu\gamma_0 k^2 w_\nu \Re\inner{\partial_y v\partial_y^2 v \theta}{\partial_y\theta}.
	\end{align*}
	We estimate the inner products on the right-hand side by means of Young's inequality and get
	\begin{align*}
		\lvert \alpha_0 \xi(t) E_3 \rvert
		&\leq \frac{1}{2}\frac{\alpha_0^2}{\beta_0}\xi(t)E_1+\frac{k^2}{2}\beta_0\xi(t) E_4, \\
		\lvert 2\nu \beta_0 \Re\inner{ik \partial_y v\partial_y \theta}{\partial_y^2\theta} \rvert
		&\leq \frac{\nu}{2}\alpha_0 E_2+2\nu k^2 \frac{\beta_0^2}{\alpha_0}  \norm{\partial_y v \partial_y \theta}_{L^2_y}^{2} , \\
		\lvert \nu \beta_0 \Re \inner{ik \partial_y^2v}{\partial_y^2\theta} \rvert
		&\leq \frac{\nu}{2}\alpha_0 E_2+\frac{\nu k^2}{2}\frac{\beta_0^2}{\alpha_0}\norm{\partial_y^2v\theta}_{L^2_y}^{2} , \\
		\lvert 2\nu\gamma_0 k^2 \Re\inner{\partial_y v\partial_y^2 v \theta}{\partial_y\theta} \rvert &\leq \frac{\nu k^2}{2}\gamma_0  \lVert \partial_y v \partial_y \theta \rVert_{L^2_y}^2 +2 \gamma_0 \nu k^2\norm{\partial_y^2v\theta}_{L^2_y}^{2} ,
	\end{align*}
	from which one obtains the following estimate for the time derivative of the functional \eqref{O:phi_dv_t}
	\begin{align*}
		&\dv{}{t}\Psi 
		- \frac{1}{2} \dv{\alpha_0 }{t} E_1- \dv{\beta_0}{t}E_3-\frac{1}{2} k^2 \dv{\gamma_0}{t}E_4\\
		+&\qty(\nu-\frac{1}{2}\frac{\alpha_0^2}{\beta_0}\xi(t))E_1+\frac{k^2}{2}\beta_0\xi(t) E_4\\
		\leq&\nu k^2 \qty(2\frac{\beta_0^2}{\alpha_0}-\frac{\gamma_0}{2})\norm{\partial_y v\partial_y\theta}_{L^2_y}^{2} +\nu k^2 \qty(\frac{1}{2}\frac{\beta_0^2}{\alpha_0}+2\gamma_0)\norm{\partial_y^2v\theta}_{L^2_y}^{2} .
	\end{align*}
	Now by using conditions \eqref{O:beta_def} and the fact that $\xi(t)\leq 1$, this estimate can be simplified to 
	\begin{align*}
		&\dv{}{t}\Psi 
		- \frac{1}{2} \dv{\alpha_0 }{t} E_1- \dv{\beta_0}{t}E_3-\frac{1}{2} k^2 \dv{\gamma_0}{t}E_4\\
		+&\frac{\nu}{2}E_1+\frac{k^2}{2}\beta_0\xi(t) E_4\\
		\leq& 3\nu k^2 \gamma_0 \norm{\partial_y^2v\theta}_{L^2_y}^{2} \leq\nu k^2c_\infty \gamma_0 E_0,
	\end{align*}
	where we have used the constraint $c_\infty \geq 3 \norm{\partial_y^2v}^2_{L^\infty}$.
	Now we face the issue that the sign of the derivatives of the parameters in the first line is not determined. Hence, we need to be able to control all additional terms.
	In order to do this, we now need to assume a specific form on $\beta$. We choose
	\begin{align}
		\label{O:fix_beta_t}
		\beta(t)= C_\xi\xi(t)^2.
	\end{align}
	The exact form of the constant $C_\xi \leq 1$ will be determined later. 
	
	This choice together with the formulas for the parameters \eqref{O:beta_def} yields
	\begin{align*}
		\dv{}{t}\Psi
		&- \frac{\nu^{1/2} }{2\abs{k}^{1/2}} C_\xi^{1/2} \xi'(t) E_1- 2\abs{k}^{-1}C_\xi\xi'(t) \xi(t)E_3
		-\frac{48}{2\nu^{1/2}}\abs{k}^{1/2}C_\xi^{3/2} \xi'(t)\xi(t)^{2} E_4\\
		&+\frac{\nu}{2}E_1+\frac{k^2}{2}\beta_0\xi(t) E_4
		\leq\nu k^{2}c_\infty \gamma_0 E_0.
	\end{align*}
	Now we consider the error terms of the first line separately to find by Young's inequality that 
	\begin{align*}
		&- \bigg[\frac{\nu^{1/2} }{2} C_\xi^{1/2}\abs{k}^{-1/2} \xi'(t) E_1+ 2C_\xi \abs{k}^{-1}\xi'(t) \xi(t)E_3+\frac{48\abs{k}^{1/2}}{2\nu^{1/2}}  C_\xi^{3/2} \xi'(t)\xi(t)^{2}E_4\bigg]\\
		=&- \bigg[\frac{\nu^{1/2} }{2} C_\xi^{1/2}\abs{k}^{-1/2} \xi'(t) E_1+ 2C_\xi \xi'(t) \xi(t)\Re\langle i\partial_y v\theta, \partial_y \theta\rangle \\
		&\qquad +\frac{48\abs{k}^{1/2}}{2\nu^{1/2}}  C_\xi^{3/2} \xi'(t)\xi(t)^{2}E_4\bigg]\\
		\geq&-\bigg[\frac{5}{2}\nu^{1/2}C_\xi^{1/2}\abs{k}^{-1/2} \xi'(t) E_1+25 \frac{\abs{k}^{1/2}}{\nu^{1/2}} C_\xi^{3/2} \xi'(t)\xi(t)^{2}E_4\bigg].
	\end{align*}
	Then 
	\begin{align*}
		&\dv{}{t}\Psi+
		\left[\frac{\nu}{2}-\frac{5}{2}\nu^{1/2}C_\xi^{1/2}\abs{k}^{-1/2} \xi'(t)\right] E_1+
		\left[\frac{k^2}{2}\beta_0\xi(t)-25 \frac{\abs{k}^{1/2}}{\nu^{1/2}} C_\xi^{3/2} \xi'(t)\xi(t)^{2} \right]E_4\\
		&\leq\nu k^{2}c_\infty \gamma_0 E_0.
	\end{align*}

	implies
	\begin{align}
		\label{O:temp1}
		\dv{}{t}\Psi
		+\frac{\nu}{4}E_1+\frac{k^2}{4}\beta_0\xi(t) E_4
		\leq\nu k^2c_\infty \gamma_0 E_0
	\end{align}
	if the two conditions 
	\begin{eqnarray*}
		\frac{5}{2}\nu^{1/2}C_\xi^{1/2}\abs{k}^{-1/2} \xi'(t)&\leq& \frac{\nu}{4}\\
		25 \frac{\abs{k}^{1/2}}{\nu^{1/2}} C_\xi^{3/2} \xi'(t)\xi(t)^{2}&\leq& \frac{k^2}{4}\beta_0\xi(t)=\frac{k^2}{4}|k|^{-1}C_{\xi}\xi(t)^2\xi(t)
	\end{eqnarray*}
	are simultaneously satisfied. Since $\xi\leq 1$, this is the case when 
	\begin{equation}
		\xi'(t)\leq \frac{1}{100}C_{\xi}^{-\frac 12}|k|^\frac 12\nu^\frac 12\,.
	\end{equation}
	Notice that, for the sake of clarity of the argument, we did not try to optimize on numerical constants at any point.

	\textbf{Step 2: Spectral gap estimate}\\
	We start from \eqref{O:temp1} and rearrange it as follows:
	\begin{multline*}
		\dv{}{t}\Psi
		+\frac{\nu}{8}E_1+\frac{k^2}{8}\beta_0\xi(t) E_4
		+\frac{\nu}{8}\xi(t)^{3} E_1+\frac{k^2}{8}\beta_0\xi(t) E_4+A\nu k^2 c_\infty\gamma_0 E_0  \\ 
		\leq(A+1)\nu k^2c_\infty \gamma_0 E_0
	\end{multline*}
	where $A>0$ is a balancing parameter to be determined, and we used again that $\xi(t) \leq 1$. Now we recall the spectral gap estimate from Lemma \ref{L:spectral_gap}, which we rewrite in the form
	\begin{equation}
		E_0\leq C_{sp}[\sigma^{\frac 12} E_1+\sigma^{-\frac 12}E_4]\quad \forall \sigma\in (0,1].
	\end{equation} 
	If we choose 
	$$\sigma= \frac{\nu}{\abs{k} C_\xi }\leq 1 \qquad \mbox{ and } A=\frac{1}{128 C_\xi c_\infty C_{sp}}-1\,.$$
	then we absorb the right-hand side with parts of the left-hand side of the inequality involving $\Psi$:
	\begin{align*}   
		\dv{}{t}\Psi
		+\frac{\nu}{8}E_1+\frac{k^2}{8}\beta_0\xi(t) E_4+ A\nu k^2 \gamma_0 c_\infty E_0 &\leq 0\\
	\end{align*}
	Now, using the definition of $\gamma_0$ in \eqref{O:beta_def} and taking $C_\xi$ to be small enough such that $C_{\beta}\leq \frac{1}{128}C_{\infty}C_{\rm{sp}}$ , we then get
	\begin{align}
		\label{O:post_spectral}
		\dv{}{t}\Psi
		+\frac{\nu}{8}E_1+\frac{k^2}{8}\beta_0\xi(t) E_4+C_\xi'(\nu \abs{k})^{1/2}\xi(t)^3 E_0 &\leq 0,
	\end{align}
	where $C_{\beta}'=16 AC_{\beta}^{\frac 32}c_{\infty}$.
	
	It is crucial that this time we do not have a dependency on $\xi(t)$ in $\sigma$ or $A$, therefore we do not obtain a lower bound and hence can allow it to go to zero.
	\\
	From here we can directly move to the closing step.
	\\
	\textbf{Step 3: Rebuilding the functional}\\
	We rewrite \eqref{O:post_spectral} as
	\begin{align*}
		\dv{}{t}\Psi+C_\xi'(\nu \abs{k})^{1/2}\xi(t)^3 
		\qty[E_0+\frac{\nu^{1/2}}{\abs{k}^{1/2}C_\xi \xi(t)^3}E_1+\frac{\abs{k}^{1/2}}{\nu^{1/2}\xi(t) }E_4] &\leq 0\\
		\iff \dv{}{t}\Psi+C_\xi'(\nu \abs{k})^{1/2}\xi(t)^3 
		\qty[E_0+\frac{1}{C_\xi^{3/2} \xi(t)^4}\alpha_0E_1+\frac{k^2}{16 C_\xi^{3/2}\xi(t)^4 }\gamma_0 E_4] &\leq 0.
	\end{align*}
	Since $\xi(t) \leq 1$, we choose a suitable constant $C_{\xi}$ such that $(16 C_{\xi}^{3/2} \xi(t)^4)^{-1}\geq 5$ and hence
	\begin{align}
		\dv{}{t}\Psi+C_\xi'(\nu \abs{k})^{1/2}\xi(t)^3 
		\qty[E_0+5\alpha_0E_1+5k^2\gamma_0 E_4] &\leq 0.
	\end{align}
	From this, the statement of the Lemma follows directly by coercivity of the energy functional.
\end{proof}

\subsection{Illustrative examples of time-modulated flows}\label{intermittent-flow}
In this section, we examine examples which are induced by velocity fields that can be switched on and off at different time intervals. We present two examples: the first involves a velocity field that features a very slow switch on phase and then transitions continuously into a fast polynomial profile. This profile requires a gluing argument to alternate between Theorems \ref{T:on_off} and \ref{T:hyp_weights_main}. The second example features a slowly varying flow that is gradually switched on, held constant for a period, and then switched off. This case is fully addressed by Theorem \ref{T:on_off}, and the decay rate can be directly computed from the upper bound in \eqref{onoff_decay}.

Flows that alternate between the regimes of Theorems \ref{T:hyp_weights_main} and \ref{T:on_off} can be handled by employing a “gluing” approach. Specifically, we will use the end state of one estimate as the initial data for the next. 
Before presenting a concrete example, we will
clarify this procedure in an abstract framework:\\ 
Assume $t_0=0$ and that we have identified times $t_i$, for $i = 1,\dots,N$ such that on each interval $T_i = [t_{i-1}, t_i]$, are either in the regime of Theorem \ref{T:on_off} or Theorem \ref{T:hyp_weights_main}. Our goal here is to determine the dissipation rate in terms of the viscosity without focusing on optimisation of the constants. Therefore, we do not track them explicitly and they can change from line to line. We begin by applying results consecutively in the interval $[t_{N-2},t_{N}]=[t_{N-2},t_{N-1}]\cup [t_{N-1},t_{N}]$, we find that for the final time $t_N$ we have
\begin{align*}
	E_0(t_N) &\leq K_N \,e^{-C_N (\nu\abs{k})^{1/2}\int_{T_N}\xi_N(\tau)\,d\tau} E_0(t_{N-1}) \\
	&\leq K_NK_{N-1} \, e^{-C_N (\nu\abs{k})^{1/2}\int_{T_N}\xi_N(\tau)\,d\tau} e^{-C_{N-1}(\nu\abs{k})^{1/2}\int_{T_{N-1}}\xi_{N-1}(\tau)\,d\tau}E_0(t_{N-2}) \\
	&= K_NK_{N-1} \, e^{-(C_N \int_{T_N}\xi_N(\tau)\,d\tau + C_{N-1}\int_{T_{N-1}}\xi_{N-1}(\tau)\,d\tau)(\nu\abs{k})^{1/2}} E_0(t_{N-2}).
\end{align*}

Here the $K_{i}$ incorporate all respective prefactors such as the logarithmic corrections, $C_i$ are the general constants in the exponential and $\xi_i(t)$ is either the weight adjusted $\xi(t)$ (as in Theorem \ref{T:hyp_weights_main}) and/or some power of it (as in Theorem \ref{T:on_off}). 
The above procedure can be iterated to obtain
\begin{align*}
	E_0(t_N)&\leq \Pi_i K_{i} e^{-C_i  (\nu\abs{k})^{1/2}\int_{T_i}\xi_i(\tau)d\tau}E_0(0)\leq K e^{-C (\nu\abs{k})^{1/2}\sum_i \int_{T_i} \xi_i(\tau)d\tau}E_0(0)\\
	&=K e^{-C  (\nu\abs{k})^{1/2}\int_{0}^{T_N} \xi^*(\tau)d\tau}E_0(0),
\end{align*}
where $K=\max\{K_{i}\}^N$, $C=\min\{C_i\}$ and $\xi^*(t)=\sum_i\xi_i(t)\bm{1}_{T_i}$.\\
By relabelling, we can write this in the form of a generic $t$ as
\begin{align}
	\label{G:abstract}
	E_0(t)\leq K e^{-C (\nu\abs{k})^{1/2}\int_{0}^{t} \xi^*(\tau)d\tau}E_0(0), \quad \forall t \leq T_N.
\end{align}
Lastly, we note that at times when a flow is inactive, the decay is purely diffusive and the decay rate is dictated by the Poincaré inequality applied to Lemma \ref{L:energy}.

\smallskip
In order to simplify the notation, we will assume $\abs{k}=1$ in the following examples.\\ \\
\textbf{Example A}\\  
Consider the flow
\begin{align}
	\xi_A(t)=\xi_1(t) \bm{1}_{T_1}+\xi_2(t) \bm{1}_{T_2}
\end{align}
with
\begin{align*}
	\xi_1(t)&=\nu^{1/2}t,\, T_1=\qty[0,\frac{1}{\nu^{1/2}}]\\
	\xi_2(t)&=(1+\nu^{1/4} (t-\nu^{-1/2}))^4,\, T_2=\qty[\frac{1}{\nu^{1/2}},\frac{1}{\nu^{3/4}}].
\end{align*}
This flow begins at zero, growing within the framework of Theorem~\ref{T:on_off}. Upon reaching \(\xi(t) = 1\) at \(t = \frac{1}{\nu^{1/2}}\), it transitions into a phase of polynomial growth, eventually entering the admissible class described in Theorem~\ref{T:hyp_weights_main}. Consequently, we must bridge the results of these two theorems, necessitating the use of the aforementioned "gluing" procedure.

We first compute the decay within each interval separately.
At the endpoint of the first interval at $t=\frac{1}{\nu^{1/2}}$ we invoke Theorem \ref{T:on_off} and find
\begin{align}
	\label{ex_a}
	E_0\qty(\frac{1}{\nu^{1/2}})\leq K e^{-C\nu^{1/2}\int_0^{\nu^{-1/2}}\xi_1(\tau)^3d\tau}E_0(0)=K e^{-\frac{C}{4}} E_0(0).
\end{align}
At the endpoint of the second interval at $t=\frac{1}{\nu^{3/4}}$ by Theorem \ref{T:hyp_weights_main} we find  that
\begin{align}
	\begin{split}
	E_0\qty(\frac{1}{\nu^{3/4}})&=K e^{-C\nu^{1/2}\int_{\nu^{-1/2}}^{\nu^{-3/4}}\qty(1+\nu^{1/4} (\tau-\nu^{-1/2}))d\tau}E_0(0)\\
	&=K e^{-\frac{C}{2}\qty(\frac{1}{\nu^{3/4}}+\frac{3}{\nu^{1/4}}-\frac{2}{\nu^{1/2}}-2)} E_0\qty(\frac{1}{\nu^{1/2}}).
\end{split}
\end{align}
Hence, by the "gluing" strategy outlined above, we find 
\begin{align}
	\label{glue}
	E_0\qty(\frac{1}{\nu^{3/4}})&\leq K e^{-\frac{C}{4}}e^{-\frac{C}{2}\qty(\frac{1}{\nu^{3/4}}+\frac{3}{\nu^{1/4}}-\frac{2}{\nu^{1/2}}-2)} E_0(0).
\end{align}
For comparison, an autonomous flow $\xi=1$ would have the decay estimate
\begin{align*}
	E_0\qty(\frac{1}{\nu^{3/4}})&\leq K e^{-C\frac{1}{\nu^{1/4}}} E_0(0).
\end{align*}
We find that, while the glued estimate \eqref{glue} includes lower order corrections, the leading order decay at time $t=\frac{1}{\nu^{3/4}}$ is indeed greater than in the autonomous case. However, since the polynomial acceleration happened after the autonomous enhanced dissipation timescale of $t=\frac{1}{\nu^{1/2}}$ was reached, its effect is small. This is because at timescales of order $\mathcal{O}\qty(\frac{1}{\nu^{3/4}})$ both estimates already scale with negative powers of $\nu$ and therefore in both settings almost all energy is already dissipated at these times. As can be seen from \eqref{ex_a}, the enhanced dissipation timescale mirrors the autonomous case up to a factor of $\frac{1}{4}$.\\ \\
\textbf{Example B}\\
We now consider the example given in the introduction.  We have
\begin{align}
	\xi_B(t)=\xi_1(t) \bm{1}_{T_1}+\xi_2(t) \bm{1}_{T_2}+\xi_3(t) \bm{1}_{T_3}
\end{align}
with
\begin{align*}
	\xi_1(t)&=\nu^{1/2}t,\, T_1=\qty[0,\frac{1}{\nu^{1/2}}]\\
	\xi_2(t)&=1,\, T_2=\qty[\frac{1}{\nu^{1/2}},\frac{1}{\nu^{3/4}}]\\
	\xi_3(t)&=\frac{\nu t-1}{\nu^{1/4} - 1} ,\, T_3=\qty[\frac{1}{\nu^{3/4}},\frac{1}{\nu}].
\end{align*}
This flow consists of a turn-on phase $T_1$, a stationary phase $T_2$ and a turn-off phase $T_3$.
We note that all intervals are in the regime of Theorem \ref{T:on_off}, and we therefore can compute the decay without gluing. 
Invoking Theorem \ref{T:on_off} we find
\begin{align*}
	E_0\qty(\frac{1}{\nu})&\leq K e^{-C\nu^{1/2}\int_0^{\nu^{-1}}\xi_B(\tau)^3d\tau} E_0(0)\\
	&=K e^{-C\nu^{1/2}\qty[\int_0^{\nu^{-1/2}}\xi_1(\tau)^3d\tau+\int_{\nu^{-1/2}}^{\nu^{-3/4}}\xi_2(\tau)^3d\tau+\int_{\nu^{-3/4}}^{\nu^{-1}}\xi_3(\tau)^3d\tau]} E_0(0)\\
	&=K e^{-C\qty[\frac{1}{4}+\frac{1-\nu^{1/4}}{\nu^{1/4}}+\frac{1-\nu^{1/4}}{4\nu^{1/2}}]} E_0(0)=K e^{-C\qty[ \frac{1}{4\nu^{1/2}} -\frac{3}{2}]} E_0(0).
\end{align*}
Note that here $K,C$ are directly given in the statement of Theorem \ref{T:on_off}.
For comparison, the dacay estimate for an autonomous flow $\xi(t)=1$ is
\begin{align*}
	E_0\qty(\frac{1}{\nu})&\leq K e^{-C\frac{1}{\nu^{1/2}}} E_0(0).
\end{align*}
We note that, as expected, the leading-order term retains the $\nu$-scaling of the autonomous setting. However, the presence of an additional factor of $\tfrac{1}{4}$, along with lower-order corrections, results in a slightly weaker overall decay rate.

Further, we observe that since the turn-off phase occurs well after the enhanced dissipation timescale is reached, its effect on the overall estimate is minimal. This is since, for $\nu \ll 1$, almost all the energy is dissipated before the turn-off phase begins. 

\begin{figure}[ht]
	\centering
	\includegraphics[width=5.0truein]{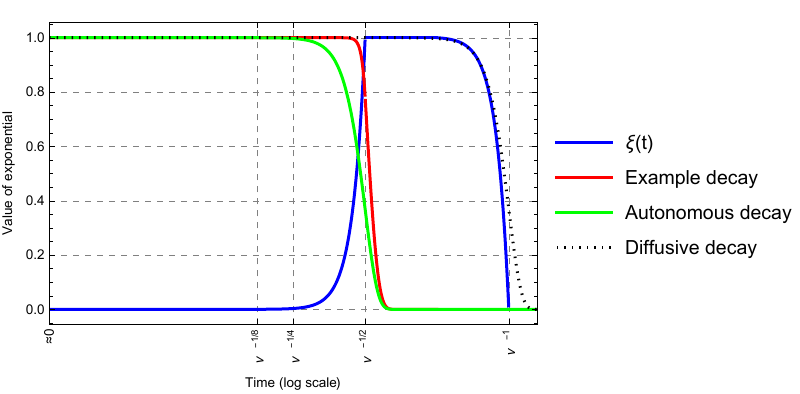}
	\caption{Plot of the exponential decay in Example B, compared to \(\xi(t) = 1\) and pure diffusion as references, all without logarithmic corrections, with \(k = 1\) and \(\nu = 10^{-10}\).
	}
\end{figure}
We conclude this section with some final remarks. In both examples, we observe that the growth restriction in \eqref{class2} affects the decay rate. Since the flow is initially inactive in both cases, we must wait until the autonomous enhanced dissipation timescale, \(t = \frac{1}{\nu^{1/2}}\), for \(\xi(t)\) to reach \(\mathcal{O}(1)\). Consequently, decay is achieved on this timescale. This contrasts with the acceleration discussed in the introduction, where the flow described in \eqref{ex} remains entirely within the admissible class of Theorem \ref{T:hyp_weights_main}.
\appendix
\section{Mixing estimate}
\label{A:mix}
In this section we prove a mixing estimate for solutions of the transport equation on the set $\Omega=\mathbb{T}^2\times [0,\infty)$
\begin{equation}\label{ad}
	\begin{array}{rrl}
		\partial_t \Theta +\bm{u}(x,y,t)\partial_x \Theta&=&0\\
		\Theta(x,y,0)&=&\Theta_0(x,y)\,,
	\end{array}
\end{equation}
where $ \bm{u}(x,y,t)=(\xi(t)v(y), 0)^T$.\\
Taking the horizontal Fourier series in  the $x$-direction, the transport equation reduces to
\begin{equation}\label{eq}
	\partial_t \hat{\Theta} +\xi(t)v(y)i k\hat{\Theta}=0
\end{equation}
with initial data $\hat{\Theta}(k, y, 0)=\hat{\Theta}_0(k,y)$. 

\begin{proposition}
	\label{P:mix}
	Let $ \bm{u}(x,y,t)=(\xi(t)v(y), 0)^T$, where we assume that $\xi(t) > 0 $ and $v(y) \in \mathcal{C}^2(\mathbb{T})$ only admits finitely many simple critical points. Then for $\hat{\Theta}_0(k,y) \in H^1$ the respective solution $\hat{\Theta}(k,y,t)$ to \eqref{ad} satisfies the decay estimate
	\begin{align}
		\label{mix-est}
		\norm{\hat{\Theta}(k,t)}_{H_{y}^{-1}}\leq\min\left\{\frac{C}{(\abs{k}\Xi(t))^{1/2}},1\right\}\norm{\hat{\Theta}_0(k)}_{H_{y}^1},
	\end{align}
	where $\Xi(t) = \int_0^t \xi(\tau) d\tau$ and $C \geq 1$ is independent of $\nu,k$.
\end{proposition}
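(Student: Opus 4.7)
The plan is to use the explicit solution of the transport equation \eqref{eq} and reduce the question to an oscillatory-integral estimate by duality. Observe that \eqref{eq} is solved in closed form by
\[
\hat{\Theta}(k,y,t) \;=\; \hat{\Theta}_0(k,y)\,\exp\bigl(-i k\,\Xi(t)\,v(y)\bigr),\qquad \Xi(t)=\int_0^t\xi(\tau)\,d\tau.
\]
Since $|\hat\Theta(k,y,t)|=|\hat\Theta_0(k,y)|$ pointwise in $y$, the $L^2_y$ norm is preserved; the embedding $L^2_y\hookrightarrow H^{-1}_y$ then immediately yields the trivial bound $\|\hat\Theta(k,t)\|_{H^{-1}_y}\leq \|\hat\Theta_0(k)\|_{L^2_y}\leq \|\hat\Theta_0(k)\|_{H^1_y}$, which accounts for the ``$1$'' inside the minimum. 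It remains to establish the $(|k|\Xi(t))^{-1/2}$ rate, which will dominate once $\Xi(t)$ is large.

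For the decay, I would use duality
\[
\|\hat\Theta(k,t)\|_{H^{-1}_y}\;=\;\sup_{\|\phi\|_{H^1_y}\leq 1}\Bigl|\int_{\mathbb{T}} a(y)\,e^{-ik\Xi(t)v(y)}\,dy\Bigr|,\qquad a(y):=\hat\Theta_0(k,y)\overline{\phi(y)},
\]
so that the problem becomes a stationary-phase estimate with phase $-k\Xi(t)v(y)$. Using that the critical points $\{y_j\}_{j=1}^N$ of $v$ are simple and isolated and that $v''$ is continuous, I would cover $\mathbb{T}$ by a finite collection $\{J_\ell\}$ of closed intervals of two types: type (N) intervals disjoint from the critical set, on which $|v'(y)|\geq c>0$; and type (S) intervals each containing at most one $y_j$ and on which $|v''(y)|\geq c'>0$ together with $v'$ monotone.

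On a type-(N) interval $J$, integration by parts using $\partial_y e^{-ik\Xi v} = -ik\Xi v'\,e^{-ik\Xi v}$ produces a factor $(|k|\Xi(t))^{-1}$, and the lower bound on $|v'|$ together with the one-dimensional Sobolev embedding $H^1_y(\mathbb{T})\hookrightarrow L^\infty_y(\mathbb{T})$ controls the resulting boundary and bulk terms by $C(|k|\Xi(t))^{-1}\|\hat\Theta_0\|_{H^1_y}\|\phi\|_{H^1_y}$. On a type-(S) interval I would invoke the van der Corput lemma in its second form (see \cite{Stein}): since $|v''|\geq c'$ and $v'$ is monotone,
\[
\Bigl|\int_J a(y)\,e^{-ik\Xi(t) v(y)}\,dy\Bigr|\;\leq\; C\bigl(|k|\Xi(t)\bigr)^{-1/2}\Bigl(\|a\|_{L^\infty(J)}+\|a'\|_{L^1(J)}\Bigr),
\]
and the Sobolev embedding together with the product rule and Cauchy--Schwarz bounds the right-hand side by $C(|k|\Xi(t))^{-1/2}\|\hat\Theta_0\|_{H^1_y}\|\phi\|_{H^1_y}$. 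Summing over the finite family $\{J_\ell\}$, the slower $(|k|\Xi(t))^{-1/2}$ decay dominates, and taking the supremum over $\phi$ yields the claimed bound.

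The main technical point I anticipate is at the stationary intervals: applying van der Corput requires the monotonicity of $v'$ and a uniform lower bound on $|v''|$, with constants independent of $k$, $\Xi(t)$, and the particular critical point; these follow from the simple-critical-point hypothesis and the finiteness of the partition, but some care is needed to verify that the $\|a'\|_{L^1(J)}$ estimate produces only the $H^1_y$ norms of $\hat\Theta_0$ and $\phi$ (with a prefactor depending on $v$ but not on $t,k,$ or the initial datum). Once this is checked, patching the pieces together and combining with the trivial conservation bound produces the minimum in \eqref{mix-est}.
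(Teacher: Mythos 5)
Your argument is correct and reaches the same $(\abs{k}\Xi(t))^{-1/2}$ rate, but it takes a genuinely different route at the critical points. The paper never invokes the van der Corput lemma: it introduces a smooth partition of unity with cut-offs $m_j$ supported on $\delta$-neighbourhoods of the critical points, bounds the contribution near each $y_j$ \emph{trivially} by the measure $4\delta$ of the support, bounds the contribution away from the critical points by a single integration by parts using that the simplicity of the critical points forces $\abs{\partial_y v}\gtrsim\delta$ on $\operatorname{supp}m_0$ (so that piece is $\lesssim(\abs{k}\Xi(t)\delta)^{-1}$), and then optimises $\delta\sim(\abs{k}\Xi(t))^{-1/2}$ to balance the two. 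Your version instead fixes the decomposition once and for all and extracts the $(\abs{k}\Xi(t))^{-1/2}$ decay on the stationary intervals directly from the second-derivative van der Corput estimate, $\abs{v''}\geq c'>0$ there (for which, incidentally, monotonicity of $v'$ is not needed --- that hypothesis is only required in the first-derivative version). What your approach buys is a cleaner, optimisation-free argument that quotes a standard lemma; what the paper's approach buys is self-containedness (only integration by parts and a measure bound) and the fact that the smooth cut-offs kill all boundary terms. On that last point, one small thing to tighten in your write-up: a cover of $\mathbb{T}$ by closed intervals that merely ``covers'' will overcount overlapping regions when you sum the pieces, so you should either take the $J_\ell$ essentially disjoint or insert a subordinate partition of unity; and on the non-stationary intervals your integration by parts genuinely produces endpoint terms, which are harmless (they are $O((\abs{k}\Xi(t))^{-1}\norm{a}_{L^\infty})$ with $\abs{v'}\geq c$) but should be written out. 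Neither issue affects the validity of the proof.
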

Here 
\begin{align}
	\label{M:H_minus}
	\norm{\hat{\Theta}(k,t)}_{H_{y}^{-1}}=\sup_{\eta\in H^1:\norm{\eta}_{H_{y}^1}=1} \int_{\mathbb{T}}\hat{\Theta}(k,y,t)\overline{\eta(y)} dy.
\end{align}
denotes the standard $H^{-1}-$norm.
For shear flows, these mixing estimates follow from standard results on oscillatory integrals, usually referred to as the "method of stationary phase". See e.g. Stein \cite[Chapter 8]{Stein}. 
\begin{proof}
	Equation \eqref{eq} can be integrated easily, obtaining
	\begin{align*}
		\hat{\Theta}(k,y,t)=e^{-ik\Xi(t) v(y) }\hat{\Theta}_0(k,y).
	\end{align*}
	We note that, as $\xi(t) > 0$, $\Xi(t)$ grows monotonically in time.\\
	We first look at the region $\Xi(t)\geq 1$:
	by assumption the velocity $v(y)$ only admits a finite number of simple critical points which we label as $y_j$. To simplify the presentation, we may assume that $\partial_y v(y)$ is monotone. Indeed, if this is not the case, we can decompose the domain into a finite number of subintervals over which $\partial_y v(y)$ is monotone. The subsequent argument can then be applied on each subinterval, and summing the resulting estimates establishes the general case, up to a constant that only depends on the number of subintervals. \\Now let $\delta$ be a small parameter to be fixed later. We then define smooth functions $m_j(y)$ such that, for any $j$, $m_j=1$ on the interval $[y_j-\delta,y_j+\delta]$ and $m_j=0$ outside the interval $[y_j-2\delta,y_j+2\delta]$. Additionally, we define $m_0=1-\sum_j m_j$ such that the $m_j$ together with $m_0$ define a partition of unit on $\mathbb{T}$. (See \cite{Bedrossian2017} for an explicit construction.)
	\\
	Now let $\eta(y)$ to be given via \eqref{M:H_minus}. 
	Looking for an estimate on the $H^{-1}$-norm of $\hat{\Theta}$, we need to control the integral
	\begin{align*}
		\int_{\mathbb{T}}\hat{\Theta}(k, y,t)\overline{\eta(y)} dy&=\sum_{j=1} \int_{\mathbb{T}}e^{-ik\Xi(t) v(y) }\phi_j \;dy+ \int_{\mathbb{T}}e^{-ik\Xi(t) v(y) }\phi_0 \;dy\\
		&=\sum_{j=1} \mathcal{I}_j+\mathcal{I}_0,
	\end{align*}
	where $\phi_j(k,y)=\hat{\Theta}_0(k,y)\overline{\eta(y)}m_j(y)$, $\phi_0(k,y)=\hat{\Theta}_0(k,y)\overline{\eta(y)}m_0(y)$. \\
	We treat the integrals around the critical points $\mathcal{I}_j$ and away from the critical points $\mathcal{I}_0$ separately.
	\\
	The integrand in $\mathcal{I}_0$ has support on regions containing no critical points. On this regions, we can integrate by parts, obtaining
	\begin{align}
		\begin{split}
			\label{M:I_0}
			\mathcal{I}_0=\int_{\mathbb{T}}e^{-ik\Xi(t) v(y) }\phi_0 dy&=\int_{\mathbb{T}}-\frac{1}{ik\Xi(t)\partial_y  v(y) }\dv{}{y}\qty(e^{-ik\Xi(t) v(y)  })\phi_0 dy\\
			&=\int_{\mathbb{T}}e^{-ik\Xi(t) v(y) }\dv{}{y}\qty(\frac{1}{ik\Xi(t) \partial_y v(y) }\phi_0 )dy.
		\end{split}	
	\end{align}
	Notice that, given the assumptions on $v$, there exists a constant $C>0$ depending on $v$, but not on $\delta$ and $y$, such that $\abs{\partial_y v(y)}\geq C \delta \quad \forall y \in \text{supp} \,\phi_0 $.  
	Then we estimate the right-hand side in \eqref{M:I_0} as follows:
	\begin{align*}
		\abs{ \mathcal{I}_0}=&\abs{\int_{\mathbb{T}}e^{-ik\Xi(t) v(y) }\dv{}{y}\qty(\frac{1}{ik\Xi(t) \partial_y v(y) }\phi_0 )dy}\leq \frac{1}{\abs{k}\Xi(t)}\int_{\mathbb{T}}\abs{\dv{}{y}\qty(\frac{1}{ \partial_y v(y) }\phi_0)}dy\\
		\leq& \frac{1}{\abs{k}\Xi(t)}\qty[\int_{\mathbb{T}}\abs{\dv{}{y}\frac{1}{ \partial_y v(y) }\phi_0}dy+\int_{\mathbb{T}}\abs{\frac{1}{ \partial_y v(y) } \phi'_0}dy]\\
		\lesssim & \frac{1}{\abs{k}\Xi(t)}\qty[\norm{\phi_0}_{L^\infty_y}\int_{\mathbb{T}\; \cap\; \text{supp}\; \phi_0}\abs{\dv{}{y}\frac{1}{ \partial_y v(y) }}dy +\frac{1}{\delta}\int_{\mathbb{T}\; \cap\; \text{supp}\; \phi_0}\abs{ \phi'_0}dy].
	\end{align*}
	We now recall our initial assumption that $\partial_y v(y)$ is monotone. Then so is $\frac{1}{\partial_y v(y)}$ and hence $\frac{d}{dy}\frac{1}{\partial_y v(y)}$ has a definite sign. This allows us to pull the absolute value out of the first integral, getting
	\begin{align*}
		&\int_{\mathbb{T}\; \cap\; \text{supp}\; \phi_0}\abs{\dv{}{y}\frac{1}{ \partial_y v(y) }}dy=\abs{\int_{\mathbb{T}\; \cap\; \text{supp}\; \phi_0}\dv{}{y}\frac{1}{ \partial_y v(y) }dy}=\abs{\frac{1}{ \partial_y v(y) }\biggr\rvert_{\partial\:\mathbb{T}\; \cap\; \text{supp}\; \phi_0}}\\
		&\leq \frac{1}{C\delta},
	\end{align*}
	where we used again that $\abs{\partial_y v(y)}\geq C \delta$ on the support of $\phi_0$.
	Hence we find 
	\begin{align*}
		\mathcal{I}_0 \lesssim \frac{1}{\abs{k}\Xi(t) \delta} \qty [\norm{\phi_0}_{L^\infty_y}+ \int_{\mathbb{T}\; \cap\; \text{supp}\; \phi_0}\abs{ \phi'_0}dy].
	\end{align*}
	Observing that $\norm{\phi_0}_{L^\infty_y}\leq \norm{\phi_0}_{H^1_y}$ we conclude that
	\begin{align}
		\label{m:I0_final}
		\mathcal{I}_0 \lesssim \frac{1}{\abs{k}\Xi(t) \delta}\norm{\hat{\Theta}_0}_{H^1_y} ,
	\end{align} 
	where we used the definition of $\phi_0$ and $\eta$.
	\\
	\\
	Differently from \eqref{m:I0_final}, the terms $\mathcal{I}_j$ can be easily controlled by bounding the negative exponential
	\begin{align}
		\label{m:IJ_final}
		\mathcal{I}_j=\int_{\mathbb{T}}e^{-ikV(y,t) }\phi_j \;dy \leq 4\delta \norm{\hat\Theta_0(k)}_{H_{y}^1},
	\end{align}
	where we used that $\phi_j$ has support on the regions around the critical points, which is a small interval of length $4\delta$.\\
	We finally conclude 
	\begin{align*}
		\abs{\int_{\mathbb{T}}\hat{\Theta}(k,y,t)\overline{\eta(y)} dy}&=\sum_j \mathcal{I}_j+\mathcal{I}_0\\
		&\lesssim  \qty(4\delta+ \frac{1}{\abs{k}\Xi(t)\delta})\norm{\hat\Theta_0(k)}_{H_y^1}
		\lesssim{\abs{k}^{1/2}\Xi(t)^{1/2}}\norm{\hat\Theta_0(k)}_{H_{y}^1},
	\end{align*}
	where we now applied \eqref{m:I0_final}, \eqref{m:IJ_final} and optimised in $\delta$ by choosing $\delta\sim(|k|\Xi(t))^{-1/2}$.\\
	Taking the supremum in $\eta$, we obtain
	\begin{align}
		\norm{\hat{\Theta}(k,t)}_{H_{y}^{-1}} \leq\frac{C}{\abs{k}^{1/2}\Xi(t)^{1/2}}\norm{\hat\Theta_0(k)}_{H_{y}^1},
	\end{align}
	where $C$ is a universal constant.
	For the region $\Xi(t)\leq 1$, instead, we apply the standard estimate
	$\norm{\hat{\Theta}(k,t)}_{H_{y}^{-1}}\leq \norm{\hat\Theta_0(k)}_{L^2_y}$.
	By combining the two estimates for $\Xi(t)< 1$ and $\Xi(t)\geq 1$, we obtain \eqref{mix-est}.
\end{proof}
We remark that if $\xi(t)$ is bounded from below, we have
$
\Xi(t)\geq \inf_t \xi(t)\: t
$
and hence an estimate of the type
\begin{align*}
	\norm{\hat{\Theta}(k,t)}_{H_{y}^{-1}}\leq\min\left\{\frac{C}{\abs{k}^{1/2}t^{1/2}},1\right\}\norm{\hat{\Theta}_0(k)}_{H_{y}^1}
\end{align*}
holds, where again the constant $C$ might vary. 

For this class of flows, we can directly apply the result of Theorem 2.1 in \cite{Elgindi2019}, yielding an upper bound for solutions of the associated advection–diffusion equation that exhibits enhanced dissipation. However, we note that the resulting dissipation rate is not necessarily optimal.
\section{Energy Balances}

\label{A:lemmas}
\begin{lemma}
	\label{L:energy}
	Let $\theta:\mathbb{T}^2\times(0,\infty)\rightarrow \R$ be a sufficiently smooth solution of \eqref{Heu:adv-eq}. Then, the following identities hold:
	\begin{enumerate}
		\item $\frac{1}{2}\dv{}{t}E_0 = -\nu E_1,$
		\item $\frac{1}{2}\dv{}{t}E_1 = -\nu E_2 - \xi(t)E_3,$
		\item $\dv{}{t}E_3 = -\xi(t)k^2E_4 - 2\nu\Re\inner{ik\partial_y v\partial_y \theta}{\partial_y^2 \theta} - \nu\Re\inner{ik\partial_y^2 v\theta }{\partial_y^2 \theta},$
		\item $\frac{1}{2}\dv{}{t}E_4 = -\nu \norm{\partial_y v\partial_y \theta}_{L^2_y}^2 - 2\nu \Re \inner{\partial_y v\partial_y^2 v \theta}{\partial_y \theta}.$
	\end{enumerate}
\end{lemma}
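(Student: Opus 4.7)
The plan is to prove each identity by differentiating the relevant $L^2$-quantity in time, substituting $\partial_t\theta$ (and its $y$-derivative, where needed) from the hypoelliptic equation
\begin{equation*}
\partial_t\theta = -\xi(t) v(y) ik\theta + \nu\partial_y^2\theta,
\end{equation*}
and integrating by parts in $y$ over $\mathbb{T}$, exploiting periodicity to discard boundary terms. The only recurring algebraic facts needed are: (i) since $v$ is real, terms of the form $\Re\langle ikv\phi,\phi\rangle$ vanish as $\int v|\phi|^2$ is real and is multiplied by the purely imaginary factor $ik$; and (ii) for any smooth $\phi,\psi$ one has $\Re\langle\phi,\psi\rangle=\Re\langle\psi,\phi\rangle$, which I use to rebalance $\partial_y$-derivatives across the inner product.

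For identity (1), I pair the equation with $\bar\theta$, take the real part, and integrate by parts once in the diffusion term; the advection contribution drops by (i). For identity (2), I differentiate the equation in $y$, obtaining $\partial_t\partial_y\theta=-\xi\partial_y v\,ik\theta-\xi v\,ik\partial_y\theta+\nu\partial_y^3\theta$; pairing this with $\overline{\partial_y\theta}$, taking real parts, and integrating by parts produces $-\nu E_2$ from the diffusion and $-\xi(t)E_3$ from the term with $\partial_y v$, while the $v\,ik\partial_y\theta$ term again drops by (i).

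Identity (3) is the most involved step and I expect it to be the main obstacle. Writing $E_3=\Re\langle ik\partial_y v\,\theta,\partial_y\theta\rangle$ and differentiating produces two contributions; substituting $\partial_t\theta$ and $\partial_t\partial_y\theta$ yields four groups of terms. The transport contributions combine into $-\xi k^2 E_4$ together with two pieces involving $v\partial_y v$, namely $\xi k^2\Re\langle v\partial_y v\,\theta,\partial_y\theta\rangle$ (from $\partial_t\theta$) and $-\xi k^2\Re\langle v\partial_y v\,\theta,\partial_y\theta\rangle$ (from the $v\,ik\partial_y\theta$ piece of $\partial_t\partial_y\theta$), which cancel exactly. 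The diffusive contributions give $\nu\Re\langle ik\partial_y v\,\partial_y^2\theta,\partial_y\theta\rangle+\nu\Re\langle ik\partial_y v\,\theta,\partial_y^3\theta\rangle$; integrating by parts the $\partial_y^3\theta$ term produces
\begin{equation*}
-\nu\Re\langle ik\partial_y v\,\partial_y\theta,\partial_y^2\theta\rangle-\nu\Re\langle ik\partial_y^2 v\,\theta,\partial_y^2\theta\rangle,
\end{equation*}
and using the symmetry relation (ii) I rewrite the remaining $\nu\Re\langle ik\partial_y v\,\partial_y^2\theta,\partial_y\theta\rangle$ as $-\nu\Re\langle ik\partial_y v\,\partial_y\theta,\partial_y^2\theta\rangle$, so the two $\partial_y v\,\partial_y\theta$ terms add to $-2\nu\Re\langle ik\partial_y v\,\partial_y\theta,\partial_y^2\theta\rangle$. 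This matches the stated identity.

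Finally, for (4), I differentiate $E_4=\int(\partial_y v)^2|\theta|^2\,dy$ and note that the transport contribution gives an imaginary integrand and vanishes under $\Re$. The diffusive contribution is $2\nu\Re\int(\partial_y v)^2\bar\theta\,\partial_y^2\theta\,dy$, and one integration by parts in $y$ distributes one derivative onto $(\partial_y v)^2$ (producing the cross term $-2\nu\Re\langle\partial_y v\,\partial_y^2 v\,\theta,\partial_y\theta\rangle$) and one onto $\bar\theta$ (producing $-\nu\|\partial_y v\,\partial_y\theta\|_{L^2_y}^2$), completing the identity. Throughout, the smoothness of $\theta$ and $v$ justifies all manipulations.
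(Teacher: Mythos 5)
Your proposal is correct and follows essentially the same route as the paper: substitute $\partial_t\theta$ from the equation, use the antisymmetry $\Re\langle ik\xi v f,f\rangle=0$ to kill the transport contributions, and integrate by parts over $\mathbb{T}$, with the cancellation of the two $v\partial_y v$ terms in identity (3) handled exactly as in the paper's computation. The only cosmetic remark is that in identity (4) your intermediate factor of $2\nu$ belongs to $\frac{d}{dt}E_4$ rather than $\frac12\frac{d}{dt}E_4$, but the final identity you state is the correct one.
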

Since these identities are based on standard energy estimates,  see e.g. \cite{Bedrossian2017,cotizelatigallay}, we only give a quick demonstration. 
\begin{proof}
	In this proof we will repeatedly use the antisymmetric property of the advection term under the $L^2 (\mathbb{T})$ inner product as
	\begin{equation} \label{anitsymmetricprop}
		\Re \inner{ik\xi(t)v(y) f}{f}=0 \quad\forall f\in L^2 (\mathbb{T}).
	\end{equation}
	We also note that boundary terms vanish due to periodicity of the domain. The first identity can be obtained by testing equation \eqref{Heu:adv-eq} with $\theta$ and integrating over $\mathbb{T}$, we have
	\begin{align*}
		\frac{1}{2} \frac{d}{d t} E_0 &= \Re \langle \partial_t \theta, \theta \rangle = \Re \langle \nu \partial_y^2 \theta - i k \xi(t) v \theta, \theta \rangle = - \nu E_1,
	\end{align*}
	where we have used \eqref{anitsymmetricprop}. The second identity follows similarly by testing equation \eqref{Heu:adv-eq} with $-\partial_y^2\theta$, which gives
	\begin{align*}
		\frac{1}{2} \frac{d}{d t} E_1 &= \Re \langle \partial_t \partial_y \theta, \partial_y \theta \rangle = \Re \big\langle \big[ \nu \partial_y^3 \theta - i k \xi(t) v \partial_y \theta - i k \xi(t) \partial_y v \theta \big], \partial_y \theta \big\rangle \\
		&= - \nu E_2 - \xi(t) E_3. 
	\end{align*}
	For the third identity we obtain
	\begin{align*}
		\dv{}{t}E_3 &= \Re\langle ik\partial_y v \partial_t \theta, \partial_y \theta\rangle + \Re\langle ik\partial_y v\theta, \partial_y \partial_t \theta\rangle \\
		&= k^2\xi(t) \Re [\inner{v\partial_y v \theta}{\partial_y \theta}-\inner{\partial_y v \theta}{\partial_y v \theta} -\inner{\partial_y v \theta}{v \partial_y \theta}]\\
		&+\nu \Re [\inner{ik \partial_y v \partial_y^2\theta}{\partial_y \theta}+ \inner{i k \partial_y v \theta}{\partial_y^3 \theta} ] \\
		&= - k^2 \xi(t) E_4 - \nu \Re \langle i k \partial_y^2 v \theta, \partial_y^2 \theta \rangle - 2 \nu \Re \langle i k \partial_y v \partial_y \theta, \partial_y^2 \theta \rangle.
	\end{align*}
	The last identity is then obtained as follows
	\begin{align*}
		\frac{1}{2}\dv{}{t}E_4 &= \Re \langle \partial_y v \partial_t \theta , \partial_y v \theta \rangle = \Re \big\langle \partial_y v \big[ \nu \partial_y^2 \theta - i k \xi(t) v \theta \big], \partial_y v \theta \big\rangle \\
		&= - \nu \Re \inner{\partial_yv\theta}{\partial_y v \partial_y \theta}- 2\nu \Re\inner{\partial_y^2 v \partial_y \theta}{\partial_y v \theta} \\
		&=-\nu \norm{\partial_y v\partial_y \theta}^2-2\nu \Re \inner{\partial_y v\partial_y^2 v \theta}{\partial_y \theta},
	\end{align*}
	where we have integrated the viscous term by parts and used property \eqref{anitsymmetricprop} for the transport term. 
\end{proof}

\section*{Acknowledgments}
The authors extend their gratitude to Daniel Boutros for many insightful discussions. Additionally, C.N. expresses heartfelt thanks to Valerio Nobili, at the Quisisana Clinic Laboratory in Rome, for his demonstrations and detailed explanations of the mixing tools used in his work, which inspired aspects of the content in this study.


\end{document}